\newtheorem{theorem}{Theorem}
\newtheorem{lemma}{Lemma}
\newtheorem{corollary}{Corollary}
\title{Solubility of Additive Forms of Twice Odd Degree over Ramified Quadratic Extensions of $\mathbb{Q}_2$}
\author{Drew Duncan \and David B. Leep}
\date{\today}
\begin{document}

\maketitle

\begin{abstract}
We determine the minimal number of variables $\Gamma^*(d, K)$ which guarantees a nontrivial solution for every additive form of degree $d=2m$, $m$ odd, $m \ge 3$ over the six ramified quadratic extensions of $\mathbb{Q}_2$.  We prove that if $K$ is one of $\{\mathbb{Q}_2(\sqrt{2}), \mathbb{Q}_2(\sqrt{10}), \mathbb{Q}_2(\sqrt{-2}), \mathbb{Q}_2(\sqrt{-10})\}$, $\Gamma^*(d,K) = \frac{3}{2}d$, and if $K$ is one of $\{\mathbb{Q}_2(\sqrt{-1}), \mathbb{Q}_2(\sqrt{-5})\}$,  $\Gamma^*(d,K) = d+1$.  The case $d=6$ was previously known.
\end{abstract}

\section{Introduction}

Homogeneous forms of the type
\begin{equation}
\label{eq}
a_1x_1^d + a_2x_2^d + \ldots + a_sx_s^d
\end{equation} where $a_1, \ldots, a_s$ belong to some field $K$ are known as \textit{additive forms} of degree $d$.  A famous conjecture of Artin claimed that any homogeneous form over a local field $K$ of degree $d$ in $d^2+1$ variables has a nontrivial zero regardless of the choice of coefficients from $K$.  Many well-known counterexamples to this conjecture have been discovered (see \cite{MR197450}, \cite{greenberg1969lectures}), but none of them have been additive forms.  It has therefore been proposed that the conjecture holds when restricted to additive forms.  We will refer to this as Artin's Additive Form Conjecture.

Let $\Gamma^*(d, K)$ represent the minimum number of variables $s$ such that every form (\ref{eq}) is guaranteed to have a nontrivial zero, regardless of the choice of $a_i \in K$.  In this language, Artin's Additive Form Conjecture posits that $\Gamma^*(d, K) \le d^2+1$. Davenport and Lewis \cite{davenport1963homogeneous} introduced the method of contraction, and established not only the truth of Artin's Additive Form Conjecture for every field of p-adic numbers $\mathbb{Q}_p$, but the optimality of this bound when $d=p-1$.  They noted, however, that when $d \ne p-1$ very often a much lower value of $\Gamma^*(d, \mathbb{Q}_p)$ can be found.  Further, little is known in general about $\Gamma^*$ for finite extensions of $\mathbb{Q}_p$.

In \cite{knapp2016solubility}, Knapp, using the method of contraction, showed that for every ramified quadratic extension $K$ of $\mathbb{Q}_2$, we have $\Gamma^*(6,K) \le 9$, with equality holding for $K \in \{\mathbb{Q}_2(\sqrt{2}), \mathbb{Q}_2(\sqrt{10}), \mathbb{Q}_2(\sqrt{-2}), \mathbb{Q}_2(\sqrt{-10})\}$.  For the remaining two extensions, $\mathbb{Q}_2(\sqrt{-1})$ and $\mathbb{Q}_2(\sqrt{-5})$, Knapp further showed that $\Gamma^*(6,K) \ge 7$ and conjectured that $\Gamma^*(6,K) = 7$, and the authors recently showed that Knapp's conjecture holds \cite{duncan2020solubility}.

Here, similar techniques will be used to extend these results to forms of degree $d=2m$, where $m$ is an odd integer, at least 3.

\begin{theorem} \label{theo}
Let $d=2m$, where $m$ is an odd integer at least 3.
\begin{itemize}
    \item If $K \in \{\mathbb{Q}_2(\sqrt{2}), \mathbb{Q}_2(\sqrt{10}), \mathbb{Q}_2(\sqrt{-2}), \mathbb{Q}_2(\sqrt{-10})\}$, then $\Gamma^*(d,K) = \frac{3}{2}d$.
    \item If $K \in  \{\mathbb{Q}_2(\sqrt{-1})$, $\mathbb{Q}_2(\sqrt{-5})$\}, then $\Gamma^*(d,K) = d+1$.
\end{itemize}
\end{theorem}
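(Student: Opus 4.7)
The proof of Theorem~\ref{theo} separates into lower bound constructions (showing $\Gamma^*(d,K) \geq N$ via explicit counterexamples) and upper bound arguments via the method of contraction. I would treat the six extensions in parallel, organized by which of the two target values $N \in \{\tfrac{3}{2}d, d+1\}$ applies.

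For the lower bounds, I would construct explicit additive forms in $N-1$ variables having no nontrivial zero in $K$. These generalize Knapp's $d=6$ examples and take the schematic shape $\sum_r \pi^r \sum_j u_{r,j} x_{r,j}^d$, where $\pi$ is a uniformizer of $K$ and the unit coefficients $u_{r,j}$ are chosen to represent carefully selected cosets of $(O_K^\times)^d$. Given any putative zero, rescaling by the minimum valuation of the coordinates produces a congruence modulo a high power of $\pi$ that the units were arranged to forbid. Because $m$ is odd, raising units to the $m$-th power is a bijection on $O_K^\times$ modulo any power of $\pi$, so verifying non-solvability reduces almost entirely to a computation with squares in $O_K^\times/(O_K^\times)^2$, a structure that matches the $d=6$ case and can be transplanted from Knapp and \cite{duncan2020solubility}.

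For the upper bounds, I would use contraction. Normalize a form $F = \sum_{i=1}^s a_i x_i^d$ so that each $v_K(a_i) \in \{0, 1, \ldots, d-1\}$ and let $m_r$ denote the number of coefficients at level $r$. The backbone of the argument is a \emph{unit subform lemma}: there is a threshold $t_0(K)$ such that every diagonal form $u_1 y_1^d + \cdots + u_t y_t^d$ with units $u_i \in O_K^\times$ has a nontrivial zero whenever $t \geq t_0(K)$. If some $m_r \geq t_0(K)$, rescaling kills the factor $\pi^r$ and we are done. Otherwise I would extract partial zeros of low-level subforms and contract, lowering the populated levels in a controlled way; the hypothesis $s \geq \Gamma^*(d,K)$ must ensure that enough variables survive this recursion to eventually trigger the lemma.

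The hardest step will be the unit subform lemma, which is also where the dichotomy between the two families of extensions emerges. For $\mathbb{Q}_2(\sqrt{-1})$ and $\mathbb{Q}_2(\sqrt{-5})$ the group $O_K^\times/(O_K^\times)^d$ has more favorable structure (fewer cosets to cover), yielding the tighter bound $d+1$; in the other four fields additional cosets must be hit, forcing $\tfrac{3}{2}d$. Proving the lemma uniformly in odd $m \geq 3$ requires isolating the parts of Knapp's $d=6$ argument that genuinely depend on $d$ (the Hensel lifting exponent and the number of valuation cycles one must iterate) from those that depend only on the $2$-adic structure of the ramified extension, which is stable as $d$ ranges over $2 \cdot \text{odd}$. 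Verifying that the contraction recursion terminates with a positive variable budget in every distribution $(m_0, \ldots, m_{d-1})$ will be the most delicate bookkeeping.
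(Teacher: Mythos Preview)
Your broad architecture is right and matches the paper: explicit counterexamples for the lower bounds, contraction plus Hensel for the upper bounds, with the dichotomy between the two families of fields emerging from the arithmetic of $\mathcal{O}_K^\times/(\mathcal{O}_K^\times)^d$. But the proposal underestimates where the real obstruction lies and is missing the specific mechanism the paper uses to overcome it.

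Your ``unit subform lemma'' does exist in the paper in a weak form (seven variables in one level always suffice; five suffice with a coset condition in the $\sqrt{-1},\sqrt{-5}$ cases), but it is \emph{not} the load-bearing step. The genuine difficulty is that with only $\tfrac32 d$ or $d+1$ variables spread across $d$ levels you will almost never accumulate that many in a single level by naive contraction. The paper's engine is instead a taxonomy of contractions keyed to the \emph{$\pi$-coefficient} (the second $\pi$-adic digit) of each coefficient: two variables with matching $\pi$-coefficient can be pushed up exactly two, at least three, or (in the $\sqrt{-1},\sqrt{-5}$ cases, using three variables) exactly four levels, with control over the $\pi$-coefficient of the output. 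Combined with the Hensel criterion that raising a single variable by five levels wins, this yields a large catalogue of small local configurations that force a zero. Your proposal never mentions tracking $\pi$-coefficients, and without that refinement the contraction recursion you sketch has no way to guarantee the five-level jump.

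The second missing idea is the endgame. After ruling out all the favorable local configurations, the paper shows the surviving level distributions must be built from a short list of two- or three-level ``blocks'' (e.g.\ $[{}^3_0,0]$, $[{}^2_0,0]$), and then runs an induction along the block decomposition that eventually wraps around modulo $d$; the parity hypothesis $m$ odd is used precisely here, to force a collision between blocks at levels $d-2$ and $0$. Your ``variable budget'' bookkeeping is too coarse a description of this: it is not a counting argument but a structural one, and the oddness of $m$ enters only at this wraparound step, not in the coset analysis you describe.
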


\section{Preliminaries}

Let $K$ denote one of the ramified quadratic extensions of $\mathbb{Q}_2$, and $\mathcal{O}$ denote its ring of integers.  Without loss of generality, assume $a_i \in \mathcal{O}\backslash\{0\}$.  Let $\pi$ be a uniformizer (generator of the unique maximal ideal) of $\mathcal{O}$, so that for any $c \in \mathcal{O}$, $c$ can be written $c = c_0 + c_1\pi + c_2\pi^2 + c_3\pi^3 + \ldots$, with $c_i \in \{0,1\}$.  The choices of uniformizer and corresponding representation of 2 are listed in Table 1 (cf. Table 1 of \cite{knapp2016solubility}).

\begin{table}[]
    \label{fields}
    \caption{Uniformizer and representation of 2}
    \centering
    \begin{tabular}{|c|c|c|}
        \hline 
         $K$ & $\pi$ & $2 \pmod{\pi^4}$ \rule{0pt}{2.6ex} \\
         \hline
         \rule{0pt}{2.6ex} \rule[-0.9ex]{0pt}{0pt}
         $\mathbb{Q}_2(\sqrt{2})$ & $\sqrt{2}$ & $\pi^2$ \rule{0pt}{2.6ex} \rule[-0.9ex]{0pt}{0pt}\\
         \hline
         $\mathbb{Q}_2(\sqrt{-2})$ & $\sqrt{-2}$ & $\pi^2$ \rule{0pt}{2.6ex} \\
         \hline
         $\mathbb{Q}_2(\sqrt{10})$ & $\sqrt{10}$ & $\pi^2$ \rule{0pt}{2.6ex} \\
         \hline
         $\mathbb{Q}_2(\sqrt{-10})$ & $\sqrt{-10}$ & $\pi^2$ \rule{0pt}{2.6ex} \\
         \hline
         $\mathbb{Q}_2(\sqrt{-1})$ & $1 + \sqrt{-1}$ & $\pi^2 + \pi^3$ \rule{0pt}{2.6ex} \\
         \hline
         $\mathbb{Q}_2(\sqrt{-5})$ & $1 + \sqrt{-5}$ & $\pi^2 + \pi^3$ \rule{0pt}{2.6ex} \\
         \hline
    \end{tabular}
\end{table}

Factoring out the highest power of $\pi$, the coefficient $c$ of any variable $x$ can be written in the form $c = \pi^r(c_0 + c_1\pi + c_2\pi^2 + c_3\pi^3 + \ldots)$, $c_0 \neq 0$.  Such a variable is said to be at \textit{level $r$}, and we will refer to the value of $c_1$ as its \textit{$\pi$-coefficient}, and more generally $c_k$ as its \textit{$\pi^k$-coefficient}.  We will also have occasion to refer to selections of coefficients as a \textit{coefficient class}.  By the change of variables $\pi^r x^d = \pi^{r-id}(\pi^i x)^d = \pi^{r-id}y^d$ for $i \in \mathbb{Z}$, we will consider the level of a variable modulo $d$ for the remainder of the paper.

Multiplying a form by $\pi$ increases the level of each variable by one, and does not affect the existence of a nontrivial zero.  Considering the levels of variables modulo $d$, applying this transformation any number of times effects a cyclic permutation of the levels.  This is useful for arranging the variables in an order which is more convenient, a process to which we will refer as \textit{normalization}. See Lemma 3 of \cite{davenport1963homogeneous} for a proof of the following Lemma.

\begin{lemma}
Given an additive form of degree $d$ in an arbitrary local field $K$, let $s$ be the total number of variables, $s_i$ be the number of variables in level $i \pmod{d}$. By a change of variables, the form may be transformed to one with:
\begin{align}
\begin{split}
s_0 \ge \frac{s}{d},  
\end{split}
\begin{split}
s_0 + s_1 \ge \frac{2s}{d},
\end{split}
\begin{split}
\ldots,
\end{split}
\begin{split}
s_0 + \ldots + s_{d-1} = s
\end{split}
\end{align}
\end{lemma}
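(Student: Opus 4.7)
The plan is to reduce the claim to a standard cyclic rearrangement argument on the sequence $(s_0, s_1, \ldots, s_{d-1})$. Since multiplying the form by $\pi$ shifts every level up by one modulo $d$, applying the transformation $m$ times cyclically permutes the level-counts: if $s'_i$ denotes the number of variables at level $i$ after multiplying by $\pi^m$, then $s'_i = s_{i-m \bmod d}$. Thus it suffices to show that there exists some $k \in \{0, 1, \ldots, d-1\}$ such that the rotated sequence $(s_k, s_{k+1}, \ldots, s_{k+d-1})$ (indices mod $d$) satisfies
\[
s_k + s_{k+1} + \cdots + s_{k+j} \;\geq\; \frac{(j+1)s}{d}
\qquad \text{for every } j = 0, 1, \ldots, d-1.
\]

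To produce such a $k$, I would normalize by subtracting the average. Define $t_i = s_i - s/d$ for $i = 0, \ldots, d-1$, and set $T_0 = 0$ and $T_i = t_0 + t_1 + \cdots + t_{i-1}$ for $1 \le i \le d$. Since $\sum_{i=0}^{d-1} s_i = s$, we have $T_d = 0$. The desired rotated inequalities are equivalent to requiring all partial sums of $(t_k, t_{k+1}, \ldots)$ to be nonnegative.

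The key step is to choose $k \in \{0, 1, \ldots, d-1\}$ to be an index at which $T_k$ attains its minimum over $\{T_0, T_1, \ldots, T_{d-1}\}$. For any $j$ with $0 \le j \le d-1$, the $j$-th partial sum of the rotated sequence equals $T_{k+j+1} - T_k$ when $k + j + 1 \le d$, and equals $(T_d - T_k) + T_{k+j+1-d} = T_{k+j+1-d} - T_k$ otherwise; in either case the value is nonnegative by the minimality of $T_k$. Undoing the substitution then yields the inequalities of the Lemma for the rotated sequence.

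There is no real obstacle here; the argument is the familiar "minimum prefix" trick (a discrete cycle-lemma type statement), and the only thing to verify carefully is the bookkeeping between (a) "multiplying by $\pi$" on the form, (b) the cyclic shift on the vector $(s_0,\ldots,s_{d-1})$, and (c) the choice of starting index $k$ used above. Since the author cites Lemma 3 of Davenport--Lewis \cite{davenport1963homogeneous} for the proof, I would keep the write-up brief and simply present the minimum-partial-sum choice of $k$ as the whole content of the argument.
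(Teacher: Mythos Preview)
Your argument is correct: the minimum-prefix-sum choice of rotation is exactly the standard way to obtain these inequalities, and your bookkeeping between multiplication by $\pi$, the cyclic shift of $(s_0,\ldots,s_{d-1})$, and the choice of $k$ is accurate. The paper does not give its own proof of this lemma at all---it simply refers the reader to Lemma~3 of Davenport--Lewis \cite{davenport1963homogeneous}---and the classical proof there is essentially the same cyclic rearrangement argument you outline, so your approach coincides with the intended one.
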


Consider two variables $x_1,x_2$ in the same level, and without loss of generality assume their coefficients $a_1,a_2$ are not divisible by $\pi$ (by the above cyclic permutation of levels).  Suppose there is an assignment $x_1 = b_1, x_2 = b_2$ such that $\pi^k | (a_1 b_1^d + a_2 b_2^d)$.  Then the change of variables $x_1=b_1 y, x_2 = b_2 y$ yields a form in which the two variables $x_1, x_2$ are replaced with the new variable $y$ at least $k$ levels higher.  If this new form has a nontrivial zero, then there is a nontrivial zero of the original form.  This transformation is known as a \textit{contraction}, and it is key to all of the results that follow.

The following lemma establishes the existence of the types of contractions for $d=2m$, $m$ odd, at least 3, which will be used in this proof.  Each type of contraction is named for convenient reference.

\begin{lemma}~
\begin{enumerate}
    \item Two variables in the same level with differing $\pi$-coefficients can be contracted to a variable one level up having a $\pi$-coefficient of one's choosing. (\textit{d}-contraction)
    
    \item Two variables in the same level with the same $\pi$-coefficient can be contracted to a variable exactly two levels up. (\textit{s2}-contraction)
    
    \item Two variables in the same level with the same $\pi$-coefficient can be contracted to a variable at least three levels up. (\textit{s3}-contraction)
    
    \item Suppose $K \in \{\mathbb{Q}_2(\sqrt{-1}), \mathbb{Q}_2(\sqrt{-5})\}$.  Among three variables in the same level with the same $\pi$-coefficient, two can be contracted to a variable at least four levels up. (\textit{t}-contraction)
    
    \item Suppose $K \in \{\mathbb{Q}_2(\sqrt{2}), \mathbb{Q}_2(\sqrt{10}), \mathbb{Q}_2(\sqrt{-2}), \mathbb{Q}_2(\sqrt{-10})\}$.  Among three variables in the same level with the same $\pi$-coefficient, two can be contracted to a variable exactly two levels up having the same $\pi$-coefficient. (\textit{st}-contraction)
\end{enumerate}
\end{lemma}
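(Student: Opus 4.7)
The plan is to prove each of the five contractions by explicitly choosing unit multipliers $b_i$ of the form $1$ or $1 + \pi$ and computing $\sum_i a_i b_i^d$ modulo an appropriate power of $\pi$. After multiplying the form by a suitable power of $\pi$ I may assume the relevant variables sit at level $0$. The main computational input is the binomial expansion
\[
(1 + t\pi)^d = 1 + dt\pi + \binom{d}{2} t^2 \pi^2 + \binom{d}{3} t^3 \pi^3 + \cdots,
\]
together with $v_\pi(d) = v_\pi(2) = 2$, the parity facts that $\binom{d}{2} = m(2m-1)$ is odd and $v_2\!\left(\binom{d}{3}\right) = 1 + v_2(m-1) \geq 2$, and the representations of $2$ modulo $\pi^4$ from Table 1. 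These combine to give, for $t \in \{0,1\}$ in all six fields, the uniform expansion $(1 + t\pi)^d \equiv 1 + t\pi^2 + t\pi^3 \pmod{\pi^4}$.

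For the d-, s2-, and s3-contractions (Parts 1--3) it suffices to work modulo $\pi^3$, where $2 \equiv \pi^2$ uniformly. With $b_1 = 1$ and $b_2 = 1 + t\pi$, the sum $a_1 + a_2 b_2^d$ has $\pi^2$-coefficient $1 + a_1^{(2)} + a_2^{(2)} + t$. In Part 1 the differing $\pi$-coefficients of $a_1, a_2$ force the sum to have valuation exactly $1$, and the choice $t \in \{0, 1\}$ toggles the new $\pi$-coefficient, giving both residues; in Parts 2 and 3 the common $\pi$-coefficient cancels the $\pi$-term, and $t$ is chosen to make the $\pi^2$-coefficient equal to $1$ (s2, exactly two levels up) or $0$ (s3, at least three levels up).

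For the t- and st-contractions (Parts 4 and 5) I push the expansion to modulo $\pi^4$, and the two field families enter through the $\pi^3$-coefficient of $2$. A direct expansion of $a_i + a_j(1 + t\pi)^d$ shows that in both cases the $\pi^2$-level requirement determines $t$ uniquely in terms of $\beta_i + \beta_j$, and substituting back, the $\pi^3$-level requirement reduces---in \emph{both} families simultaneously---to the single $\mathbb{F}_2$-linear equation
\[
\gamma_i + (1 + \alpha)\beta_i \;=\; \gamma_j + (1 + \alpha)\beta_j,
\]
where $\alpha$ is the common $\pi$-coefficient and $(\beta_k, \gamma_k)$ denote the $\pi^2$- and $\pi^3$-coefficients of $a_k$. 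The differing $\pi^3$-coefficient of $2$ in the two families is exactly compensated by the differing target (four levels up versus two levels up with preserved $\pi$-coefficient), producing the same equation.

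The main obstacle is then purely combinatorial: among three variables at the same level with the same $\pi$-coefficient, I must produce a pair $(i, j)$ satisfying the above equation. This follows by pigeonhole on the $\mathbb{F}_2$-valued map $k \mapsto \gamma_k + (1 + \alpha)\beta_k$---three values in a two-element set force two to coincide---and explains why both the t- and st-contractions require three variables rather than two. The remaining work is verifying the uniform expansion of $(1 + t\pi)^d \pmod{\pi^4}$ in both field families, which uses that $m$ is odd, $m - 1$ is even, and $\binom{d}{3}$ has $2$-adic valuation at least $2$.
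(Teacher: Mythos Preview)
Your proof is correct and follows essentially the same approach as the paper: both reduce to computing $d$th powers of units modulo $\pi^4$ (obtaining $1 + c\pi^2 + c\pi^3$), expanding $a_i + a_j \cdot(\text{unit})^d$, and for parts (4)--(5) applying pigeonhole to the $\mathbb{F}_2$-valued expression $\gamma_k + (1+\alpha)\beta_k$. The only cosmetic differences are that the paper obtains the unit-power expansion by squaring first and then taking an $m$th power rather than via binomial coefficients, and that it splits the pigeonhole step into the cases $\alpha=0$ and $\alpha=1$ instead of phrasing it as a single linear functional.
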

\begin{proof}~
Using the fact that $2 \equiv \pi^2 \pmod{\pi^3}$,

\smallskip

$(1 + a\pi + b\pi^2 + c\pi^3)^{2m} \equiv (1 + 2a\pi + (2b+a^2)\pi^2 + (2c + 2ab)\pi^3)^m \equiv$

$(1 + a^2\pi^2 + a\pi^3)^m \equiv 1 + ma^2\pi^2 + ma\pi^3 \equiv$

$1 + a\pi^2 + a\pi^3 \pmod{\pi^4}.$

Therefore, the $d$\textsuperscript{th} powers modulo $\pi^4$ are $0$ and $1 + c\pi^2 + c\pi^3$ for $c \in \{0,1\}$.

\bigskip

Using $2 \equiv \pi^2 + j\pi^3 \pmod{\pi^4}, j \in \{0, 1\}$,

\smallskip

$(1 + a_1\pi + a_2\pi^2 + a_3\pi^3) + (1 + b_1\pi + b_2\pi^2 + b_3\pi^3)(1 + c\pi^2 + c\pi^3) \equiv $

$(1 + a_1\pi + a_2\pi^2 + a_3\pi^3) + [1 + b_1\pi + (b_2 + c)\pi^2 + (b_3 + b_1c + c)\pi^3] \equiv$

$2 + (a_1 + b_1)\pi + (a_2 + b_2 + c)\pi^2 + (a_3 + b_3 + b_1c + c)\pi^3 \equiv$

$(a_1 + b_1)\pi + (1 + a_2 + b_2 + c)\pi^2 + (j + a_3 + b_3 + b_1c + c)\pi^3 \pmod{\pi^4}$.

\bigskip

First suppose that the two variables have differing $\pi$-coefficients.  We have $a_1 + b_1 = 1$, and so the resulting variable is one level higher.  Choose $c$ so that $1 + a_2 + b_2 + c$ (the $\pi$-coefficient of the resulting variable) is the desired value $\pmod{2}$.  This proves (1).

\bigskip

Suppose now that $a_1 = b_1$.

\smallskip

$(a_1 + b_1)\pi + (1 + a_2 + b_2 + c)\pi^2 + (j + a_3 + b_3 + b_1c + c)\pi^3 \equiv$

$2a_1\pi + (1 + a_2 + b_2 + c)\pi^2 + (j + a_3 + b_3 + a_1c + c)\pi^3 \equiv$

$(1 + a_2 + b_2 + c)\pi^2 + (a_1 + j + a_3 + b_3 + a_1c + c)\pi^3 \pmod{\pi^4}$

\smallskip

Choosing $c$ so that $1 + a_2 + b_2 + c \equiv 1 \pmod{2}$ gives a variable raised exactly two levels.  This proves (2).  Choosing $c$ so that $1 + a_2 + b_2 + c \equiv 0 \pmod{2}$ gives a variable raised at least three levels.  This proves (3).

\bigskip

If $K \in \{\mathbb{Q}_2(\sqrt{-1}), \mathbb{Q}_2(\sqrt{-5})\}$, then $j=1$.  Choose $c$ so that $c \equiv 1 + a_2 + b_2 \pmod{2}$.

\smallskip

$(1 + a_2 + b_2 + c)\pi^2 + (a_1 + j + a_3 + b_3 + a_1c + c)\pi^3 \equiv$

$(a_3 + b_3 + (1 + a_1)(a_2 + b_2))\pi^3 \pmod{\pi^4}$

\smallskip

If $a_1 = 1$, the expression becomes $a_3 + b_3$, and by the pigeonhole principle the two variables can be chosen so that their $\pi^3$-coefficients are the same.  If $a_1=0$, the expression becomes $a_2 + b_2 + a_3 + b_3$.  If there is a pair in the same $\pi^2,\pi^3$-coefficient class, then choose that pair.  If no such pair exists, then by the pigeonhole principle there exists a pair with both differing $\pi^2$- and $\pi^3$-coefficients.  It follows that $a_2 + b_2 + a_3 + b_3 \equiv 0 \pmod{2}$.  This proves (4).

\bigskip

If $K \in \{\mathbb{Q}_2(\sqrt{2}), \mathbb{Q}_2(\sqrt{10}), \mathbb{Q}_2(\sqrt{-2}), \mathbb{Q}_2(\sqrt{-10})\}$, then $j=0$.  Choose $c$ so that $c \equiv a_2 + b_2 \pmod{2}$.

\smallskip

$(1 + a_2 + b_2 + c)\pi^2 + (a_1 + j + a_3 + b_3 + a_1c + c)\pi^3 \equiv$

$\pi^2 + (a_1 + a_3 + b_3 + (1 + a_1)(a_2 + b_2))\pi^3 \pmod{\pi^4}$

\smallskip

As above, the two variables can be chosen so that $(a_3 + b_3 + (1 + a_1)(a_2 + b_2)) \equiv 0 \pmod{2}$.  This proves (5).
\end{proof}

Because the proof of Theorem \ref{theo} involves inspecting numerous subcases, it is useful to introduce a compact notation to convey important information about particular forms and contractions performed on them.  The notation $(s_0, s_1, s_2, \ldots)$ indicates any form having \textit{at least} $s_i$ variables at level $i$.  Levels which are omitted in this notation may be assumed to contain as few as no variables.  For example, $(4, 2, 0, 1)$ indicates a form with at least four variables at level 0, at least two variables at level 1, as few as no variables at level 2, at least one variable in level 3, and as few as no variables at any higher level.

To indicate that variables in a particular level fall into certain $\pi$-coefficient classes, the number of variables in that level are partitioned into two numbers stacked vertically.  For example, $(^3_1, 2, 0, 1)$ indicates a form as above with four variables in level 0, three of which are in one $\pi$-coefficient class, and one of which is in the other.  Note that this notation does not give any indication of which $\pi$-coefficient class contains which number of variables.

We indicate that a contraction performed on a certain type of form results in a form of another specified type with an arrow, labeled with the contraction performed.  For example, consider a form with at least two variables in a level with the same $\pi$-coefficient, and having at least two variables two levels higher with differing $\pi$-coefficients.  The first two can be used to perform an \textit{s2}-contraction, resulting in a variable exactly two levels higher:

$$({}^{2}_{0}, 0, {}^{1}_{1}) \xrightarrow{s2} (0, 0, {}^{2}_{1})$$

In some places square brackets are used instead of parentheses to succinctly indicate an \textit{exact} number of variables in the specified levels and coefficient classes (e.g., $[^3_1, 2, 0, 1]$).

Finally, we give a statement of Hensel's Lemma specific to the needs of the proof below (see Theorem 2.1 of \cite{leep2018diagonal}).  This is the same version of Hensel's Lemma used in \cite{knapp2016solubility} and \cite{duncan2020solubility}.


\begin{lemma}[Hensel's Lemma]
Let $d=2m$, $m$ odd, and $x_i$ be a variable of (\ref{eq}) at level $h$.  Suppose that $x_i$ can be used in a contraction of variables (or one in a series of contractions) which produces a new variable at level at least $h+5$.  Then (\ref{eq}) has a nontrivial zero.
\end{lemma}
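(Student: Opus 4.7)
My plan is to apply Hensel's lemma (Theorem~2.1 of \cite{leep2018diagonal}) to an explicit approximate zero of $f = \sum a_j x_j^d$ extracted from the chain of contractions.

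First I would recover the approximate zero. By definition, each contraction step records specific values $b_j \in \mathcal{O}$ for the variables it combines; unrolling the chain and setting $b_j = 0$ for every variable outside it produces a tuple $b = (b_1,\ldots,b_s) \in \mathcal{O}^s$ whose $i$th coordinate $b_i$ is a unit (coming from the $\pi$-coefficient manipulation in Lemma~2 at the very first step of the chain). The defining property of a contraction---that the coefficient of the output variable equals $\sum_j a_j b_j^d$, with $\pi$-adic valuation at least the claimed level---then yields
\[
v(f(b)) \;\ge\; h+5,
\]
where $v$ is the normalized $\pi$-adic valuation on $\mathcal{O}$. In parallel, since $\partial f/\partial x_i = d\,a_i\,x_i^{d-1}$, with $v(d) = v(2m) = v(2) = 2$ (because $m$ is odd and $\pi^2 \parallel 2$ in every ramified quadratic extension of $\mathbb{Q}_2$, cf.\ Table~1), $v(a_i) = h$, and $v(b_i) = 0$, one obtains
\[
v\!\Bigl(\tfrac{\partial f}{\partial x_i}(b)\Bigr) \;=\; h + 2.
\]

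Finally I would invoke Hensel via Newton iteration $b \mapsto b - f(b)\,e_i/\partial_i f(b)$. The observation that makes the bound ``$+5$'' tight here is that the quadratic error in the iteration, $a_i \binom{d}{2} b_i^{d-2}\,\delta^2$, has valuation $h + 2\,v(\delta)$: the usually troublesome factor of $2$ in the denominator of the Taylor expansion is cancelled exactly by $v(d)=2$, so that $\binom{d}{2} = m(2m-1)$ is an odd integer of valuation $0$. With $v(\delta) = v(f(b)) - v(\partial_i f(b)) \ge 3$, the first Newton step improves the valuation of $f$ from $h+5$ to $h+6$, the next to $h+8$, then $h+12$, and so on, geometrically. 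The iteration converges to a zero $b' \in \mathcal{O}^s$ of $f$ whose $i$th coordinate remains a unit (because all corrections satisfy $v(\delta) \ge 3 > 0$), so $b'$ is a nontrivial zero of (\ref{eq}). The main technical point to track is exactly this $v\!\left(\binom{d}{2}\right) = 0$ cancellation, without which one would need the stronger bound $v(f(b)) \ge 2h+5$ coming from a naive application of the classical Hensel lemma; here the structure of degree $2m$ with $m$ odd, together with $\pi^2 \parallel 2$, precisely delivers the needed slack.
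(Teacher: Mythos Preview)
The paper does not prove this lemma at all: it is stated with a pointer to Theorem~2.1 of \cite{leep2018diagonal} (the same reference you invoke) and nothing more. Your proposal goes further by actually sketching the underlying Newton--Hensel argument, and the sketch is correct. The decisive observation is that for the one-variable polynomial $g(t)=C+a_it^d$ the Taylor remainder after the Newton step is $\sum_{k\ge2}\binom{d}{k}a_ib_i^{\,d-k}\delta^k$; since $\binom{d}{2}=m(2m-1)$ is a $\pi$-adic unit and $v(a_i)=h$, this remainder has valuation at least $h+2v(\delta)$. From $v(\delta)\ge3$ one gets $v(g)\ge h+6$ after one step, and the recursion $\alpha\mapsto 2\alpha-4$ on $\alpha=v(g)-h$ then escapes to infinity. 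This sharpening of the textbook Hensel inequality $v(f(b))>2v(\partial_if(b))$ (which here would force $h\le0$) is exactly what the cited theorem packages for diagonal forms; you have essentially reproduced its content.

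One minor imprecision: you justify $v(b_i)=0$ by appeal to ``the very first step of the chain,'' but the hypothesis allows $x_i$ to enter at any stage of a series of contractions, not necessarily the first. This is harmless, since every substitution appearing in any contraction of Lemma~2 is a unit, and the accumulated value $b_i$ is a product of such units, so $v(b_i)=0$ holds regardless of where $x_i$ enters the chain.
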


By Hensel's Lemma, for the rest of this paper we may assume that an \textit{s3}-contraction produces a variable either 3 or 4 levels higher, and a \textit{t}-contraction produces a variable exactly 4 levels higher.  To draw the reader's attention to a level from which a variable satisfying this statement of Hensel's Lemma will originate, we append an asterisk.  For example:

$$({}^{2}_{0}, 0, {}^{1}_{1}) \xrightarrow{s2} (0, 0, {}^{2*}_{1})$$

\section{General Lemmas}

\begin{lemma} \label{slide}
Suppose that (\ref{eq}) has two variables in level $k$, and at least one variable in levels $k+1$, $k+2$, ... $k+t-1$, then contractions can be performed to produce a variable at level at least $k+t$.
\end{lemma}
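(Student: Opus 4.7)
The plan is to prove the statement by induction on $t$. The base case $t=1$ is immediate: with two variables at level $k$ in hand, any of the three single-level contractions from Lemma 2, namely $d$, $s2$, or $s3$, already produces a variable at level at least $k+1$.

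For the inductive step, I would assume the statement holds for all smaller values of $t$ (and at every level) and split into two cases according to whether the two variables at level $k$ share a common $\pi$-coefficient. If the $\pi$-coefficients differ, I would apply a $d$-contraction, using part (1) of Lemma 2 to prescribe the $\pi$-coefficient of the resulting variable at level $k+1$ so that it matches the $\pi$-coefficient of the given variable already present at that level. The form then has two variables at level $k+1$ with equal $\pi$-coefficients, while the given variables at levels $k+2, \ldots, k+t-1$ remain unused; invoking the inductive hypothesis at level $k+1$ with parameter $t-1$ produces a variable at level at least $(k+1)+(t-1)=k+t$.

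If instead the $\pi$-coefficients agree, I would apply an $s2$-contraction, producing a variable exactly at level $k+2$. When $t\le 2$ this is already the desired conclusion; otherwise, pairing the new variable with the given variable at level $k+2$ yields two variables at level $k+2$, with the variables at levels $k+3, \ldots, k+t-1$ still available, so the inductive hypothesis at level $k+2$ with parameter $t-2$ produces a variable at level at least $(k+2)+(t-2)=k+t$.

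The only genuinely delicate point is the ability, in the first case, to steer the resulting $\pi$-coefficient to a prescribed value, which is precisely what part (1) of Lemma 2 guarantees; everything else is a routine bookkeeping exercise of tracking which variables have been consumed and at which levels the hypothesis is being reapplied. I do not anticipate any real obstacle beyond this.
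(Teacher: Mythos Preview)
Your proof is correct and follows the same underlying idea as the paper's: repeatedly contract two variables at a common level to climb upward until you pass level $k+t-1$. The paper's proof is a one-liner, however, because the case split and the $\pi$-coefficient matching you perform are unnecessary. The inductive hypothesis only requires \emph{two} variables at the new level, with no constraint on their $\pi$-coefficients; and any two variables at the same level---whether their $\pi$-coefficients agree or not---can be contracted to a variable at least one level higher (via a $d$-, $s2$-, or $s3$-contraction as appropriate). So the point you flag as ``genuinely delicate'' (steering the $\pi$-coefficient after a $d$-contraction) is in fact irrelevant here: you could drop that step and the argument would still go through.
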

\begin{proof}
Any two variables in the same level can be contracted to a variable at least one level higher.  By repeated contractions, we obtain the desired variable.
\end{proof}

\begin{lemma}\label{fromtwo}
Suppose that after a series of contractions, (\ref{eq}) has two variables in the same level with the same $\pi$-coefficient, one of which has been raised at least two levels, or it has two variables in the same level, one of which has been raised at least four levels.  Then (\ref{eq}) has a nontrivial zero.
\end{lemma}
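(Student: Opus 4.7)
The plan is to apply Hensel's Lemma directly in each case, by identifying an original variable $x_i$ of (\ref{eq}) whose history of contractions, extended by one further contraction with its partner, yields a cumulative rise of at least five levels above the level of $x_i$.

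For the first case, let $u$ and $v$ be the two variables at level $\ell$ sharing a common $\pi$-coefficient, and suppose $u$ was produced by a chain of contractions starting from some original variable $x_i$ at level $h$, where $\ell - h \ge 2$ by the hypothesis that $u$ has been raised at least two levels. Since $u$ and $v$ have matching $\pi$-coefficients, part (3) of the contraction lemma allows an $s3$-contraction of $u$ and $v$, producing a new variable at level at least $\ell + 3$. Concatenating this step with the chain that produced $u$ exhibits $x_i$ as participating in a series of contractions yielding a variable at level at least $\ell + 3 \ge h + 5$, and Hensel's Lemma delivers a nontrivial zero of (\ref{eq}).

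For the second case, let $u$ and $v$ be the two variables at level $\ell$, and suppose $u$ was produced from an original variable $x_i$ at level $h$ with $\ell - h \ge 4$. Regardless of whether the $\pi$-coefficients of $u$ and $v$ agree, parts (1) and (2) of the contraction lemma guarantee a contraction of $u$ and $v$ (a $d$-contraction if their $\pi$-coefficients differ, an $s2$-contraction otherwise) to a variable at level at least $\ell + 1$. Appending this step to the chain for $u$ exhibits a series of contractions taking $x_i$ up to level at least $\ell + 1 \ge h + 5$, and Hensel's Lemma applies again.

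The only thing to be careful about is bookkeeping: what matters for Hensel's Lemma is the total rise of an \emph{original} variable through the composed sequence of contractions, not the rise of an intermediate variable relative to its immediate predecessors. The two numerical hypotheses ($+2$ with matching $\pi$-coefficient versus $+4$ unconditionally) are calibrated precisely so that the minimal guaranteed yield of one more contraction, as recorded in the contraction lemma, brings the running total to the required five levels. There is no deeper obstacle.
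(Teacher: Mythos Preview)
Your proof is correct and follows essentially the same approach as the paper: in the first case apply an $s3$-contraction to gain at least three more levels, in the second case apply any contraction to gain at least one more level, then invoke Hensel's Lemma. Your version is slightly more explicit about tracing the original variable $x_i$ through the chain of contractions, but the underlying argument is identical.
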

\begin{proof}
If one of the variables has been raised at least two levels, performing an \textit{s3}-contraction with the two variables results in a variable that has been raised at least an additional three levels, for a total of at least five levels.  If one of the variables has been raised at least four levels, performing any contraction results in a variable that has been raised at least one additional level, for a total of at least five levels.  A zero follows from Hensel's Lemma.
\end{proof}

\begin{lemma}\label{empty1}
Suppose that (\ref{eq}) has at least four variables in level $k$ which can be used to form a pair with the same $\pi$-coefficient and a pair with differing $\pi$-coefficients, and at least one variable in level $k+1$.  Then (\ref{eq}) has a nontrivial zero.
\end{lemma}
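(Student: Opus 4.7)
The plan is to invoke Lemma \ref{fromtwo} after three successive contractions that place two variables of equal $\pi$-coefficient at level $k+2$, one of which has been raised two levels. A preliminary remark: the hypothesis forces the four variables at level $k$ to split as three in one $\pi$-coefficient class and one in the other, since any disjoint decomposition of four variables into a same-$\pi$-coefficient pair and a differing-$\pi$-coefficient pair must place three of them in a common class.

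Let $\beta$ denote the $\pi$-coefficient of the variable at level $k+1$. In the shorthand of Section 2, the target sequence is
$$({}^{3}_{1}, 1) \xrightarrow{s2} ({}^{1}_{1}, 1, 1) \xrightarrow{d} (0, {}^{1}_{1}, 1) \xrightarrow{d} (0, 0, {}^{2*}_{0}).$$
Step 1 is an s2-contraction on two of the three same-$\pi$-coefficient variables at level $k$; by part (2) of the contractions lemma this produces a variable at level $k+2$ with some $\pi$-coefficient $\alpha$. Step 2 is a d-contraction on the two remaining variables at level $k$, which sit in differing classes; using the freedom afforded by part (1) of the contractions lemma, choose the resulting $\pi$-coefficient to differ from $\beta$, so that the two variables now at level $k+1$ carry opposite $\pi$-coefficients. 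Step 3 is a d-contraction on those two level-$k+1$ variables, this time choosing the resulting $\pi$-coefficient to equal $\alpha$, producing a second variable at level $k+2$ with the same $\pi$-coefficient as the first.

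Since the variable produced in Step 1 originated at level $k$ and now sits at level $k+2$, the asterisked pair at level $k+2$ satisfies the hypotheses of Lemma \ref{fromtwo}: an s3-contraction of that pair produces a variable at level at least $k+5$, and Hensel's Lemma then delivers a nontrivial zero of (\ref{eq}). I foresee no real obstacle; the one bookkeeping check is that the two $\pi$-coefficient choices in Steps 2 and 3 are made independently, which is immediate from part (1) of the contractions lemma, and that a variable originating from the Step 1 s2-contraction indeed counts as having been raised two levels in the sense of Lemma \ref{fromtwo}.
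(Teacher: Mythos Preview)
Your proof is correct and follows essentially the same route as the paper: the identical contraction sequence $({}^{3}_{1}, 1) \xrightarrow{s2} ({}^{1}_{1}, 1, 1) \xrightarrow{d} (0, {}^{1}_{1}, 1) \xrightarrow{d} (0, 0, {}^{2}_{0})$ followed by an appeal to Lemma~\ref{fromtwo}. Your preliminary remark that the four variables must split $3$--$1$ across the two $\pi$-coefficient classes, and your explicit tracking of the $\pi$-coefficient choices in the two $d$-contractions, are details the paper leaves implicit in its shorthand but which are exactly what that shorthand encodes.
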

\begin{proof}
$({}^{3}_{1}, 1) \xrightarrow{s2} ({}^{1}_{1}, 1, 1) \xrightarrow{d} (0, {}^{1*}_{1}, 1) \xrightarrow{d} (0, 0, {}^{2*}_{0})$

A zero follows from Lemma \ref{fromtwo}.
\end{proof}

\begin{lemma}\label{empty4}
Suppose $K \in \{\mathbb{Q}_2(\sqrt{-1}), \mathbb{Q}_2(\sqrt{-5})\}$, that (\ref{eq}) has at least three variables in level $k$  with the same $\pi$-coefficient, and at least one variable in level $k+4$.  Then (\ref{eq}) has a nontrivial zero.
\end{lemma}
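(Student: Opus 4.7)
The plan is to apply the \textit{t}-contraction to the three same-$\pi$-coefficient variables at level $k$, producing (by Hensel's Lemma, in the exact form noted just before Section 3) a variable exactly four levels higher, i.e.\ at level $k+4$. Combined with the variable already present at level $k+4$, this yields two variables in level $k+4$, one of which (the contracted one) has been raised exactly four levels from its origin. Lemma \ref{fromtwo} then immediately produces a nontrivial zero.

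Schematically, the argument is the single step
\[
({}^{3}_{0}, 0, 0, 0, 1) \xrightarrow{t} ({}^{1}_{0}, 0, 0, 0, {}^{2*}_{0}),
\]
where the asterisk marks the level from which the four-level jump will allow Lemma \ref{fromtwo} (the second clause: two variables in the same level, one raised at least four levels) to apply.

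There is no real obstacle here: the existence of the \textit{t}-contraction in these two fields is precisely what makes this possible, and the Hensel-type conclusion is exactly the second alternative of Lemma \ref{fromtwo}. The only thing to verify is that the leftover variable at level $k$ does not interfere; but Lemma \ref{fromtwo} is stated purely in terms of the two relevant variables in the raised level, so the remaining variable can simply be discarded from the argument.
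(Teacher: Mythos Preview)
Your proof is correct and essentially identical to the paper's: apply a \textit{t}-contraction from level $k$ to obtain a variable at level $k+4$ (raised four levels), then invoke the second clause of Lemma~\ref{fromtwo}. The only cosmetic difference is that the paper writes the resulting configuration as $(1,0,0,0,2*)$ without a $\pi$-coefficient breakdown at level $k+4$, which is slightly cleaner since the two variables there need not share a $\pi$-coefficient (and the second clause of Lemma~\ref{fromtwo} does not require it).
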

\begin{proof}
$({}^{3}_{0}, 0, 0, 0, 1) \xrightarrow{t} (1, 0, 0, 0, 2*)$

A zero follows from Lemma \ref{fromtwo}.
\end{proof}

\begin{lemma} \label{empty234}
Suppose that (\ref{eq}) has at least four variables in level $k$ which can be used to form two pairs with the same $\pi$-coefficient, and a variable in at least one of levels $k+2$, $k+3$, or $k+4$.  Then (\ref{eq}) has a nontrivial zero.
\end{lemma}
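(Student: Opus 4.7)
The plan is to split on the level $k+\ell$ of the extra variable $v$ (with $\ell\in\{2,3,4\}$) and, within each case, on the $\pi$-coefficient structure of the four level-$k$ variables: either all four share a common $\pi$-coefficient, or they form two pairs with differing $\pi$-coefficients between the pairs (the ``$2+2$'' configuration). In every case the strategy will be to use the same-$\pi$-coefficient pairs at level $k$, via $s2$- or $s3$-contractions, to manufacture one or two auxiliary variables raised by $2$ or $3$ from $k$, and then combine them with $v$ so that either Hensel's Lemma or Lemma \ref{fromtwo} fires.

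When $\ell=2$, I would $s2$-contract each of the two pairs to obtain $w_1,w_2$ at level $k+2$, each raised by $2$ from $k$, giving three variables $\{v,w_1,w_2\}$ at level $k+2$. By pigeonhole two of them share a $\pi$-coefficient, and that pair must contain some $w_i$; an $s3$-contraction on this pair yields a variable at level $\geq k+5$, and Hensel's Lemma applies via $w_i$'s ancestor at level $k$. When $\ell=3$, I would again $s2$-contract both pairs to obtain $w_1,w_2$ at $k+2$; if their $\pi$-coefficients agree I would $s3$-contract immediately to level $\geq k+5$, and otherwise I would $d$-contract $w_1,w_2$ with the output $\pi$-coefficient chosen to match $v$'s, producing $u$ at $k+3$ with $\pi$-coefficient equal to $v$'s. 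An $s3$-contraction of $(u,v)$ then reaches level $\geq k+6$, and Hensel applies via $u$'s level-$k$ ancestor.

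For $\ell=4$ I would split on the level-$k$ structure. In the $2+2$ case, I would $d$-contract both cross-pairs, choosing the two output $\pi$-coefficients to agree, obtaining two variables at $k+1$ with matching $\pi$-coefficient, each raised by $1$ from $k$. An $s3$-contraction yields a variable at level $k+4$ or $k+5$ (raised by $4$ or $5$ from $k$): at $k+5$ Hensel is immediate, while at $k+4$ pairing with $v$ invokes Lemma \ref{fromtwo}. In the all-same-$\pi$-coefficient case, I would $s3$-contract each pair to obtain $u_1,u_2$ each at level $k+3$ or $k+4$. If either lands at $k+4$, pairing it with $v$ gives Lemma \ref{fromtwo}. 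If both land at $k+3$ with agreeing $\pi$-coefficients, $s3$-contracting them yields a variable at $\geq k+6$. If both land at $k+3$ with differing $\pi$-coefficients, $d$-contracting them with the output $\pi$-coefficient chosen to match $v$'s produces a variable at $k+4$ raised by $4$ from $k$ with $\pi$-coefficient equal to $v$'s, and Lemma \ref{fromtwo} finishes.

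The delicate step will be the $\ell=4$, all-same-$\pi$-coefficient case over the fields $K\in\{\mathbb{Q}_2(\sqrt{2}),\mathbb{Q}_2(\sqrt{10}),\mathbb{Q}_2(\sqrt{-2}),\mathbb{Q}_2(\sqrt{-10})\}$, where the $t$-contraction underlying Lemma \ref{empty4} is unavailable. There the argument has to track all four combinations of output levels ($k+3$ or $k+4$) for the two $s3$-contractions and, in the worst sub-branch, exploit the freedom to choose the $\pi$-coefficient of a $d$-contraction output to engineer a pair at level $k+4$ to which Lemma \ref{fromtwo} applies.
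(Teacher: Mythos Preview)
Your proposal is correct and follows essentially the same approach as the paper: contract the two same-$\pi$-coefficient pairs upward and then combine with the extra variable via Lemma~\ref{fromtwo} or Hensel. The only noteworthy difference is in the $\ell=4$ case: the paper handles both the $2+2$ and all-same configurations uniformly by performing two $s3$-contractions and noting that if the results land in the same level Lemma~\ref{fromtwo} already applies (so one may assume one result lands at $k+3$ and the other at $k+4$, whence the latter pairs with $v$), whereas you split on the configuration and in the $2+2$ case route through two $d$-contractions to $k+1$ before a single $s3$. Your version is slightly more verbose but makes the sub-branches fully explicit; the paper's is shorter but leaves the ``both at $k+3$ with differing $\pi$-coefficients'' sub-branch implicit. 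Your closing remark about the $t$-contraction being unavailable for the four fields with $2\equiv\pi^2$ is accurate but immaterial here, since neither your argument nor the paper's uses Lemma~\ref{empty4} in this proof.
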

\begin{proof}
\begin{itemize}
    \item $({}^{2}_{2}, 0, 1) \xrightarrow{s2, s2} (0, 0, 3*)$
    
    A zero follows from Lemma \ref{fromtwo}.
    
    \item $({}^{2}_{2}, 0, 0 ,1) \xrightarrow{s2,s2}$ 
    \begin{itemize}
        \item $(0, 0, {}^{2*}_{0}, 1)$
        \item $(0, 0, {}^{1*}_{1*}, 1) \xrightarrow{d} (0, 0, 0, {}^{2*}_{0})$
    \end{itemize}
    
    A zero follows in both cases from Lemma \ref{fromtwo}.
    
    In this last case, assume by Lemma \ref{fromtwo} that the variables resulting from two \textit{s3}-contractions are not produced in the same level.
    
    \item $({}^{2}_{2}, 0, 0 ,0, 1) \xrightarrow{s3, s3} (0, 0, 0, 1*, 2*)$
    
    A zero follows from Lemma \ref{fromtwo}.
\end{itemize}

\end{proof}

\section{Lemmas Regarding Large Numbers of Variables}

\begin{lemma}\label{max7}
Suppose (\ref{eq}) has at least six variables in the same level  which can be used to form three pairs with the same $\pi$-coefficient. Then (\ref{eq}) has a nontrivial zero.

In particular, if (\ref{eq}) has seven variables in the same level, then (\ref{eq}) has a nontrivial zero.
\end{lemma}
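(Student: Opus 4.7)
The plan is to reduce to Lemma \ref{fromtwo} by a two-step pigeonhole argument. First, I would perform an \textit{s2}-contraction on each of the three disjoint pairs of variables with matching $\pi$-coefficients. Since every \textit{s2}-contraction produces a variable at exactly two levels higher, this yields three new variables all sitting in level $k+2$, each raised by exactly two levels from the original level $k$.

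Next, observe that level $k+2$ admits only two $\pi$-coefficient classes. With three new variables distributed among these two classes, the pigeonhole principle forces at least two of them to share a common $\pi$-coefficient. These two variables are then in the same level with the same $\pi$-coefficient, and each has been raised at least two levels, so Lemma \ref{fromtwo} applies directly and delivers a nontrivial zero. Schematically,
\[
({}^{2}_{2}, 0, 0, \ldots) \text{ or } ({}^{3}_{1}, 0, 0, \ldots) \text{ or } ({}^{3}_{3}, 0, 0, \ldots) \xrightarrow{s2,\,s2,\,s2} (0, 0, {}^{2}_{1}, \ldots),
\]
where the pair in the same $\pi$-coefficient class at level $k+2$ is what feeds Lemma \ref{fromtwo}.

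For the ``in particular'' clause, I would verify that seven variables in a single level always contain three disjoint same-$\pi$-coefficient pairs: any split of seven variables into two $\pi$-coefficient classes of sizes $a,b$ with $a+b=7$ produces $\floor{a/2} + \floor{b/2}$ such pairs, and a quick inspection of the four splits $(7,0),(6,1),(5,2),(4,3)$ shows the count is at least $3$ in every case. The hypothesis of the main statement is therefore met.

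I do not anticipate any real obstacle, since the whole argument is a two-stage pigeonhole reduction to an already-established lemma. The one mild design choice is to use \textit{s2}-contractions rather than \textit{s3}-contractions: \textit{s2} deposits all three resulting variables into the single level $k+2$, which makes the second pigeonhole step immediate, whereas \textit{s3} could scatter the outputs between levels $k+3$ and $k+4$ and complicate the matching step.
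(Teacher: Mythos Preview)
Your argument is correct and is essentially the paper's proof with one layer of packaging removed: the paper performs a single \textit{s2}-contraction and then invokes Lemma~\ref{empty234}, whose $k+2$ case carries out the remaining two \textit{s2}-contractions and appeals to Lemma~\ref{fromtwo}, exactly as you do directly. One cosmetic remark: your schematic starting configurations $({}^{2}_{2}),({}^{3}_{1}),({}^{3}_{3})$ do not all depict six variables admitting three same-$\pi$ pairs (e.g.\ a $3$--$3$ split yields only two such pairs), but this is only a notational slip and does not affect the argument.
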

\begin{proof}
If there are seven variables in the same level, by the pigeonhole principle, three pairs of variables may be formed having the same $\pi$-coefficient.  After performing an \textit{s2}-contraction with one of these pairs, a zero follows from Lemma \ref{empty234}.
\end{proof}

\begin{corollary}
Artin's Additive Form Conjecture holds for all quadratic ramified extensions of $\mathbb{Q}_2$ and degrees $d=2m$, $m$ odd.
\end{corollary}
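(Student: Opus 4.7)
The plan is to derive the bound $\Gamma^*(d,K) \le d^2+1$ directly from the normalization lemma (Lemma 1) together with Lemma \ref{max7}, bypassing the sharper case analysis needed for the main Theorem \ref{theo}. In other words, the corollary should fall out as a clean pigeonhole consequence of the machinery already developed.

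First I would take an arbitrary additive form (\ref{eq}) of degree $d = 2m$ (with $m \ge 3$ odd) in $s = d^2 + 1$ variables over one of the six ramified quadratic extensions $K$ of $\mathbb{Q}_2$, and apply Lemma 1 to it. This yields, after a suitable change of variables, the inequality $s_0 \ge s/d = (d^2 + 1)/d$. Since $s_0$ is a nonnegative integer and $(d^2+1)/d = d + 1/d > d$, this forces $s_0 \ge d+1$.

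Next I would invoke the degree hypothesis. Since $d = 2m$ with $m \ge 3$, we have $d \ge 6$, and hence $s_0 \ge 7$. Lemma \ref{max7} applies verbatim to produce a nontrivial zero of (\ref{eq}), which gives $\Gamma^*(d,K) \le d^2 + 1$ and thereby establishes Artin's Additive Form Conjecture in this setting.

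The only thing resembling an obstacle is verifying the arithmetic inequality $d+1 \ge 7$, which is immediate from $m \ge 3$. All of the genuine work has been done upstream, in the contraction constructions of Lemma 2 that feed into Lemma \ref{max7}. The corollary should be viewed as the coarse qualitative statement that Theorem \ref{theo} then vastly refines.
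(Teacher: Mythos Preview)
Your argument is correct and essentially identical to the paper's: both find a level with at least seven variables (you via the first inequality of Lemma~1, the paper via the bare pigeonhole bound $d^2+1 \ge 6d+1$) and then invoke Lemma~\ref{max7}. The one omission is that the corollary as stated covers all odd $m$, including $m=1$; the paper disposes of $d=2$ by citing the classical theory of quadratic forms over local fields, and you should add the same remark, since Lemma~2 (and hence Lemma~\ref{max7}) is only established for $m \ge 3$.
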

\begin{proof}
The case $m=1$ is treated separately; see \cite{MR2104929}, Chapter VI.  In the cases considered, $d \ge 6$, and so $d^2 + 1 \ge 6d + 1$, and by the pigeonhole principle, the form has a level with at least seven variables.  A zero follows from Lemma \ref{max7}.
\end{proof}

\begin{lemma} \label{fivesame}
Suppose $K \in \{\mathbb{Q}_2(\sqrt{-1}), \mathbb{Q}_2(\sqrt{-5})\}$ and (\ref{eq}) has at least five variables in the same level with the same $\pi$-coefficient, or at least six variables with at least three variables in each $\pi$-coefficient class.  Then (\ref{eq}) has a nontrivial zero.
\end{lemma}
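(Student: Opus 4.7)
The plan is to reduce each of the two hypotheses to a configuration to which Lemma \ref{empty4} applies, using a single $t$-contraction---available because $K \in \{\mathbb{Q}_2(\sqrt{-1}), \mathbb{Q}_2(\sqrt{-5})\}$ is one of the fields for which $j=1$. Recall that a $t$-contraction selects three variables in the same level with the same $\pi$-coefficient but consumes only two of them, producing a new variable exactly four levels higher (by our standing Hensel convention); the third variable is needed only for the pigeonhole step in the definition of the contraction and remains in the form.

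For the first hypothesis, five variables in some level $k$ share a common $\pi$-coefficient. I would pick three of them and perform a $t$-contraction, leaving three variables of that same $\pi$-coefficient still at level $k$, together with a new variable at level $k+4$:
\[
({}^{5}_{0}) \xrightarrow{t} ({}^{3}_{0}, 0, 0, 0, 1^{*}).
\]
Lemma \ref{empty4} then delivers a nontrivial zero.

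For the second hypothesis, at least three variables in level $k$ lie in each $\pi$-coefficient class. Applying a $t$-contraction to the three variables of one class consumes two of them, leaving one variable of that class, the three untouched variables of the other class (still sharing a common $\pi$-coefficient), and a new variable four levels up:
\[
({}^{3}_{3}) \xrightarrow{t} ({}^{3}_{1}, 0, 0, 0, 1^{*}).
\]
Once again Lemma \ref{empty4}, applied to the three untouched same-class variables and the new variable at level $k+4$, produces a nontrivial zero.

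The only subtlety is the careful bookkeeping of consumption in a $t$-contraction: if one mistakenly treated all three selected variables as being used up, then in the second case only two variables of the untouched class would appear to remain and Lemma \ref{empty4} would not apply directly. Once one notes that exactly two of the three selected variables are consumed, three variables of a common $\pi$-coefficient class survive in both cases, which is exactly what Lemma \ref{empty4} requires, and no further contraction analysis is needed.
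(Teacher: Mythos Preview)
Your argument is correct and is essentially the paper's proof, just packaged one step higher: the paper performs two $t$-contractions directly and then invokes Lemma~\ref{fromtwo}, whereas you perform one $t$-contraction and invoke Lemma~\ref{empty4}, whose proof is exactly that second $t$-contraction followed by Lemma~\ref{fromtwo}. Your careful remark about only two of the three selected variables being consumed in a $t$-contraction is the key bookkeeping point in both versions.
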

\begin{proof}~

$({}^{5}_{0}) \xrightarrow{t, t} (1,0,0,0,2*)$

$({}^{3}_{3}) \xrightarrow{t, t} ({}^{1}_{1},0,0,0,2*)$

A zero follows from Lemma \ref{fromtwo}.
\end{proof}

\begin{lemma} \label{max6}
Suppose that $s \ge \frac{7}{5}d$ and after normalization (\ref{eq}) has at least six variables in level 0.  Then (\ref{eq}) has a nontrivial zero.  

If $K \in \{\mathbb{Q}_2(\sqrt{-1}), \mathbb{Q}_2(\sqrt{-5})\}$, then the condition on $s$ is unnecessary.
\end{lemma}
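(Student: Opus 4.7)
The plan is to split into cases by the number $s_0$ of variables in level $0$ and, when $s_0 = 6$, by how those six variables partition into the two $\pi$-coefficient classes. If $s_0 \ge 7$, Lemma \ref{max7} applies directly, so I may assume $s_0 = 6$. The partition into $\pi$-coefficient classes is then one of $(6,0)$, $(5,1)$, $(4,2)$, or $(3,3)$. In the $(6,0)$ and $(4,2)$ partitions one may form three disjoint pairs sharing a $\pi$-coefficient (three pairs from six same-class variables, or two pairs from the group of four and one from the group of two), so Lemma \ref{max7} also disposes of these.

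In the $(5,1)$ partition, the first clause of Lemma \ref{fivesame} handles $K \in \{\mathbb{Q}_2(\sqrt{-1}), \mathbb{Q}_2(\sqrt{-5})\}$, since five variables share a $\pi$-coefficient. For $K$ in the other four extensions the \textit{st}-contraction is available, and I would chain two of them on the group of five:
$$({}^{5}_{1}) \xrightarrow{st,\,st} ({}^{1}_{1}, 0, {}^{2}_{0}).$$
Each of the two new variables at level $2$ has been raised exactly two levels, and they share a $\pi$-coefficient, so Lemma \ref{fromtwo} gives a nontrivial zero.

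The main obstacle is the $(3,3)$ partition in the $\sqrt{2}$-family, since no contraction using only the six level-$0$ variables seems to push a variable up by five levels. For $K \in \{\mathbb{Q}_2(\sqrt{-1}), \mathbb{Q}_2(\sqrt{-5})\}$ the second clause of Lemma \ref{fivesame} applies directly, which is exactly why the hypothesis on $s$ becomes unnecessary for those two fields. For the remaining four extensions I would exploit the assumption $s \ge \tfrac{7}{5}d$: normalization yields
$$s_0 + s_1 + s_2 + s_3 + s_4 \ge 5s/d \ge 7,$$
so with $s_0 = 6$ at least one of $s_1, s_2, s_3, s_4$ is positive. A $(3,3)$ distribution admits four level-$0$ variables forming both a same-$\pi$-coefficient pair and a differing-$\pi$-coefficient pair (three from one class together with one from the other), and also admits four level-$0$ variables forming two same-$\pi$-coefficient pairs (two from each class). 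Consequently, Lemma \ref{empty1} settles the case when the extra variable sits at level $1$, while Lemma \ref{empty234} settles it when the extra variable sits at level $2$, $3$, or $4$.
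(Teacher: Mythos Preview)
Your proof is correct. The paper argues more linearly: after reducing to $s_0 = 6$ via Lemma~\ref{max7}, it first applies Lemma~\ref{empty234} to assume levels $2,3,4$ are empty (two same-class pairs exist in every partition of six), then uses normalization with $s \ge \tfrac{7}{5}d$ to force level~$1$ to be occupied, then Lemma~\ref{empty1} to reduce to the $(6,0)$ partition, and finishes with Lemma~\ref{max7}. For $K \in \{\mathbb{Q}_2(\sqrt{-1}), \mathbb{Q}_2(\sqrt{-5})\}$ it simply notes that Lemma~\ref{fivesame} forces the partition to be $(4,2)$, whence Lemma~\ref{max7}.

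The one genuine difference is your treatment of the $(5,1)$ partition for the $\sqrt{2}$-family: you bypass the normalization hypothesis entirely there by chaining two \textit{st}-contractions and invoking Lemma~\ref{fromtwo}, whereas the paper folds $(5,1)$ into the general argument that needs $s \ge \tfrac{7}{5}d$. Your route shows that the $s$-hypothesis is only truly needed for the $(3,3)$ partition in those four fields, which is a mildly sharper observation, at the cost of an extra case and an extra contraction tool. Otherwise the two arguments use the same lemmas and the same ideas.
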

\begin{proof}
By Lemma \ref{max7}, assume that level 0 contains exactly six variables.  By Lemma \ref{empty234}, assume levels 2, 3, and 4 are unoccupied.  By normalization, $s \ge \frac{7}{5}d$ implies that levels 0 through 4 together contain at least seven variables, and so level 1 contains at least one variable.  By Lemma \ref{empty1}, assume all of the variables in level 0 have the same $\pi$-coefficient.  A zero follows from Lemma \ref{max7}.

Now, suppose that $K \in \{\mathbb{Q}_2(\sqrt{-1}), \mathbb{Q}_2(\sqrt{-5})\}$.  By Lemma \ref{fivesame}, assume that four of the variables in level 0 are in one $\pi$-coefficient class, and two are in the other.  A zero follows from Lemma \ref{max7}.
\end{proof}

\begin{lemma} \label{max5}
Suppose that $s \ge \frac{7}{5}d$ and after normalization (\ref{eq}) has at least five variables in level 0.  Then (\ref{eq}) has a nontrivial zero.  

If $K \in \{\mathbb{Q}_2(\sqrt{-1}), \mathbb{Q}_2(\sqrt{-5})\}$, then $s \ge d + 1$ is sufficient.
\end{lemma}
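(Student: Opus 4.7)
The plan is to mirror the structure of the proof of Lemma \ref{max6}: repeatedly invoke the preceding lemmas to pin down the shape of the form, reducing to the case where all five variables in level 0 share a common $\pi$-coefficient, and then finish with either Lemma \ref{fivesame} (in the special case) or a pair of \emph{st}-contractions followed by an \emph{s3}-contraction (in the general case).

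First, by Lemma \ref{max6}, assume level 0 contains exactly 5 variables. Regardless of whether the $\pi$-coefficient split is 5-0, 4-1, or 3-2, five variables in a single level can always be partitioned into two pairs with identical $\pi$-coefficients; hence Lemma \ref{empty234} allows us to assume levels 2, 3, and 4 are unoccupied. Combining $s_0 = 5$ and $s_2 = s_3 = s_4 = 0$ with the normalization inequality $s_0 + s_1 + s_2 + s_3 + s_4 \ge 5s/d$ then yields $s_1 \ge 2$ when $s \ge \frac{7}{5}d$ and $s_1 \ge 1$ when $s \ge d+1$. With a variable available in level 1, Lemma \ref{empty1} applies whenever the variables in level 0 admit both a same-$\pi$-coefficient pair and a differing-$\pi$-coefficient pair---that is, in both the 4-1 and 3-2 splits. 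So we may reduce to the case where all 5 variables in level 0 have a common $\pi$-coefficient.

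If $K \in \{\mathbb{Q}_2(\sqrt{-1}), \mathbb{Q}_2(\sqrt{-5})\}$, this is precisely the hypothesis of Lemma \ref{fivesame}, and a nontrivial zero follows immediately. For the remaining four fields, perform two successive \emph{st}-contractions in level 0: since the five variables share a $\pi$-coefficient, three of them meet the hypothesis of the \emph{st}-contraction, so two can be contracted to a variable in level 2 that retains the original $\pi$-coefficient; three variables remain in level 0 with the same common $\pi$-coefficient, and a second \emph{st}-contraction produces another variable in level 2 with the same $\pi$-coefficient. The two newly produced variables in level 2 therefore share a $\pi$-coefficient and admit an \emph{s3}-contraction, yielding a variable at level 5 or 6. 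Since the whole chain of contractions originates in level 0, Hensel's Lemma produces a nontrivial zero.

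The delicate step is the general case: we must be able to combine the two variables produced in level 2 via an \emph{s3}-contraction rather than a \emph{d}-contraction, since a \emph{d}-contraction would raise the chain by only 3 levels and would not trigger Hensel's Lemma. The \emph{st}-contraction is exactly the right tool here, because it preserves the $\pi$-coefficient on ascent, guaranteeing the matched pair in level 2 needed to set up the finishing \emph{s3}-contraction.
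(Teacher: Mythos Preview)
Your proof is correct. The treatment of the two special fields $\mathbb{Q}_2(\sqrt{-1})$, $\mathbb{Q}_2(\sqrt{-5})$ matches the paper's exactly: reduce to exactly five variables in level~$0$, clear levels $2$--$4$ via Lemma~\ref{empty234}, use normalization to put a variable in level~$1$, invoke Lemma~\ref{empty1} to force the $5$--$0$ split, and finish with Lemma~\ref{fivesame}.

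For the other four fields your argument diverges from the paper's. The paper never reduces to the $5$--$0$ split in this case; instead it uses the full strength of $s\ge\frac{7}{5}d$ to get \emph{two} variables in level~$1$, contracts that pair (by a $d$- or $s2$-contraction) into level~$2$ or~$3$, and then applies Lemma~\ref{empty234} with the original four-plus variables still sitting in level~$0$. Your route---reducing to the $5$--$0$ split and then chaining two \emph{st}-contractions with an \emph{s3}-contraction to climb from level~$0$ to level~$\ge 5$---is a genuine alternative. It has the pleasant feature that it only consumes one variable from level~$1$ (via Lemma~\ref{empty1}) rather than two, but it trades that economy for dependence on the field-specific \emph{st}-contraction; the paper's argument at this step is field-agnostic and never touches the \emph{st}-contraction at all. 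Both are short and valid.
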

\begin{proof}
By Lemma \ref{max6}, assume level 0 contains exactly five variables.  By Lemma \ref{empty234}, assume levels 2, 3, and 4 are unoccupied.

By normalization, $s \ge \frac{7}{5}d$ implies that levels 0 through 4 together contain at least seven variables, and so level 1 contains at least two variables.  These variables can be contracted to a variable in level 2 or 3 using a \textit{d}- or \textit{s2}-contraction, and a zero follows from Lemma \ref{empty234}.

Now, suppose that $K \in \{\mathbb{Q}_2(\sqrt{-1}), \mathbb{Q}_2(\sqrt{-5})\}$.  By normalization, $s \ge d + 1$ implies that levels 0 through 4 together contain at least six variables, and so level 1 contains at least one variable. By Lemma \ref{empty1}, assume all the variables in level 0 have the same $\pi$-coefficient.  A zero follows from Lemma \ref{fivesame}.
\end{proof}

\begin{lemma} \label{max4}
Suppose that $s \ge \frac{7}{5}d$ and after normalization (\ref{eq}) has at least four variables in level 0 which can be used to form two pairs with the same $\pi$-coefficient.  Then (\ref{eq}) has a nontrivial zero.  

If $K \in \{\mathbb{Q}_2(\sqrt{-1}), \mathbb{Q}_2(\sqrt{-5})\}$, then $s \ge d + 1$ is sufficient.
\end{lemma}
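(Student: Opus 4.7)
The plan is to follow the template established by Lemmas \ref{max6} and \ref{max5}: use the preceding lemmas to pin down the configuration near level 0, then produce a single new variable in one of levels 2, 3, or 4 from level 1 and finish with Lemma \ref{empty234}. Under the hypothesis $s \ge \frac{7}{5}d$ (respectively $s \ge d+1$ in the special case), Lemma \ref{max5} lets us assume that level 0 contains exactly four variables, arranged as $({}^{2}_{2})$, and Lemma \ref{empty234} lets us assume that levels 2, 3, and 4 are all unoccupied.

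Next I would read off a lower bound on $s_1$ from normalization. In the general case, $s_0 + s_1 + s_2 + s_3 + s_4 \ge \lceil 5s/d \rceil \ge 7$ forces $s_1 \ge 3$; in the special case, using $d \ge 6$, the same bound $\lceil 5(d+1)/d\rceil \ge 6$ gives $s_1 \ge 2$. Either way, at least two variables are available in level 1, which is enough to carry out one contraction there.

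The remainder of the argument is a case split on the $\pi$-coefficients of the variables in level 1. If some pair has differing $\pi$-coefficients, a $d$-contraction places a new variable in level 2, and Lemma \ref{empty234} finishes. Otherwise every variable in level 1 shares a single $\pi$-coefficient, and an $s2$-contraction on two of them places a new variable in level 3; again Lemma \ref{empty234} provides the zero. I do not anticipate any real obstacle here: the argument is essentially bookkeeping once the earlier lemmas are applied, and the only point requiring care is verifying the special-case normalization count, which follows at once from $d \ge 6$.
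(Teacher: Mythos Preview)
Your proposal is correct and follows essentially the same route as the paper's proof: reduce to exactly four variables in level~0 via Lemma~\ref{max5}, empty levels 2--4 via Lemma~\ref{empty234}, force at least two variables into level~1 by normalization, contract a pair there into level~2 or~3, and finish with Lemma~\ref{empty234}. Two cosmetic remarks: the paper handles both hypotheses at once by observing that $s \ge \tfrac{7}{5}d$ already implies $s \ge d+1$ (so only the count $s_1 \ge 2$ is needed), and the configuration in level~0 need not be $({}^{2}_{2})$ specifically---it could also be $({}^{4}_{0})$---but this is immaterial since only the ``two same-$\pi$ pairs'' property is used.
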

\begin{proof}
By Lemma \ref{max5}, assume level 0 contains exactly four variables.  By Lemma \ref{empty234}, assume levels 2, 3, and 4 are unoccupied. By normalization, $s \ge d + 1$ implies that levels 0 through 4 together contain at least six variables, and so level 1 contains at least two variables.  Any two of the variables in level 1 can be contracted to a variable in level 2 or 3, and a zero follows from Lemma \ref{empty234}.
\end{proof}

\section{Lemmas Regarding Small Numbers of Variables}

\begin{lemma}\label{2same2diff}
Suppose that (\ref{eq}) has two variables in level $k$ with the same $\pi$-coefficient, and two variables in at least one of levels $k+1$, $k+2$, or $k+3$ with differing $\pi$-coefficients.  Then (\ref{eq}) has a nontrivial zero.
\end{lemma}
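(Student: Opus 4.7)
The plan is to case-split on which of the levels $k+1$, $k+2$, $k+3$ contains the pair with differing $\pi$-coefficients, and in each case to combine a contraction on the level-$k$ same-$\pi$-coefficient pair with a contraction on the differing pair to produce a configuration fitting Lemma \ref{fromtwo}. The crucial flexibility is that a $d$-contraction lets us choose the output $\pi$-coefficient freely (Lemma 2(1)); beyond that, everything is bookkeeping of how many levels a variable has been raised.

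For the level $k+1$ case, I would first perform an $s2$-contraction on the two level-$k$ variables, producing a variable at level $k+2$ that has been raised exactly two levels. I then $d$-contract the level $k+1$ pair, choosing the output $\pi$-coefficient to match that of the $s2$-result. The outcome is two variables at level $k+2$ sharing a $\pi$-coefficient, one of them raised two levels, so Lemma \ref{fromtwo} gives a zero.

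For the level $k+2$ case, it suffices to $s2$-contract the level-$k$ pair, adding a third variable to level $k+2$. Since the new variable must land in one of the two $\pi$-coefficient classes present, it pairs off with one of the original two, again yielding two variables at level $k+2$ with the same $\pi$-coefficient and one raised two levels; Lemma \ref{fromtwo} applies.

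For the level $k+3$ case, apply an $s3$-contraction at level $k$, which by Hensel's Lemma outputs a variable at level $k+3$ or $k+4$. If at $k+3$, the three variables there include a pair with matching $\pi$-coefficient (one raised three levels), and Lemma \ref{fromtwo} finishes. If at $k+4$, $d$-contract the level $k+3$ pair to get a variable at level $k+4$, producing two variables at $k+4$ with one raised four levels, and Lemma \ref{fromtwo} again applies. The main subtlety is making explicit use of the chosen $\pi$-coefficient in the first case and correctly tracking the total level raise in the $k+3$ subcases; no step appears to pose a serious obstacle.
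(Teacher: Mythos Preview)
Your proof is correct and matches the paper's argument step for step: same case split on $k+1$, $k+2$, $k+3$, same choice of $s2$-, $s2$-, and $s3$-contractions respectively from level $k$, same use of the $d$-contraction's freedom to fix the output $\pi$-coefficient, and the same appeal to Lemma~\ref{fromtwo} in every branch (including the two subcases of the $s3$-contraction output). The only cosmetic difference is that in the $k+4$ subcase you invoke the ``raised four levels'' clause of Lemma~\ref{fromtwo} directly, whereas the paper additionally chooses the $\pi$-coefficient in the $d$-contraction to match---either route works.
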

\begin{proof}~

\begin{itemize}
    \item $({}^{2}_{0}, {}^{1}_{1}) \xrightarrow{s2} (0, {}^{1}_{1}, 1*) \xrightarrow{d} (0, 0, {}^{2*}_{0})$
    \item $({}^{2}_{0}, 0, {}^{1}_{1}) \xrightarrow{s2} (0, 0, {}^{2*}_{1})$
    \item $({}^{2}_{0}, 0, 0, {}^{1}_{1}) \xrightarrow{s3}$
    \begin{itemize}
        \item $(0, 0, 0, {}^{2*}_{1})$
        \item $(0, 0, 0, {}^{1}_{1}, 1*) \xrightarrow{d} (0, 0, 0, 0, {}^{2*}_{0})$
    \end{itemize}
\end{itemize}

In each case, a zero follows from Lemma \ref{fromtwo}.
\end{proof}

\begin{lemma}\label{2same2more}
Suppose that (\ref{eq}) has two variables in level $k$ with the same $\pi$-coefficient, and a variable in both of levels $k+2$ and $k+3$, or both of levels $k+3$ and $k+4$.  Then (\ref{eq}) has a nontrivial zero.
\end{lemma}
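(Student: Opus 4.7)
The plan is to split the hypothesis into the two stated configurations and, in each, reduce via a short chain of contractions to the hypothesis of Lemma \ref{fromtwo}. A repeatedly used tool is part (1) of the contraction lemma, which lets us \emph{select} the $\pi$-coefficient of the output of a $d$-contraction; the $s2$- and $s3$-contractions afford no such freedom, so the case analysis branches on whether the $\pi$-coefficients of the pair produced happen to coincide.

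In the first configuration, the form contains $({}^{2}_{0}, 0, 1, 1)$. I would begin with an $s2$-contraction on the two level-$k$ variables, producing a variable at level $k+2$ that has been raised two levels. Together with the original level-$(k+2)$ variable this gives a pair at level $k+2$; if their $\pi$-coefficients agree, Lemma \ref{fromtwo} (same-$\pi$ pair with one entry raised at least two levels) closes the case. Otherwise, a $d$-contraction at level $k+2$, with the output's $\pi$-coefficient chosen to agree with the original level-$(k+3)$ variable, produces a variable at level $k+3$ raised three levels whose $\pi$-coefficient matches the existing level-$(k+3)$ variable, so Lemma \ref{fromtwo} again closes the case.

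In the second configuration, the form contains $({}^{2}_{0}, 0, 0, 1, 1)$. I would begin with an $s3$-contraction on the two level-$k$ variables; the revised Hensel's Lemma (after Lemma 3) places the output at level $k+3$ or $k+4$. If it lands at $k+4$, the resulting pair at $k+4$ contains one variable raised four levels, and the ``raised $\ge 4$'' clause of Lemma \ref{fromtwo} finishes the case regardless of $\pi$-coefficients. If it lands at $k+3$ (raised three levels), the same $\pi$-coefficient dichotomy as above applies to the pair at $k+3$: a match invokes Lemma \ref{fromtwo} directly, while a mismatch is resolved by a $d$-contraction at $k+3$ whose output $\pi$-coefficient is chosen to match the original level-$(k+4)$ variable, producing a pair at $k+4$ with matching $\pi$-coefficients and one entry raised four levels, to which Lemma \ref{fromtwo} applies.

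The only thing to watch carefully is the bookkeeping of how many levels the starred (raised) variable in each final pair has accumulated, and the disciplined use of the $\pi$-coefficient freedom in each $d$-contraction to align the pair at the higher level. Structurally the argument mirrors Lemma \ref{2same2diff}, so no essentially new ingredient beyond what is already established there is required; the main mild obstacle is simply ensuring that in every branch either the ``same $\pi$, raised $\ge 2$'' or the ``raised $\ge 4$'' clause of Lemma \ref{fromtwo} is actually met, which is what forces the two-step contraction in the mismatch subcases.
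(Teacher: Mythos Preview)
Your proof is correct and, for the first configuration, identical to the paper's. For the second configuration your argument is also correct but proceeds slightly differently: after the $s3$-contraction the paper simply notes that in either landing case one has two variables at some level and one at an adjacent level, then invokes Lemma~\ref{slide} together with Hensel's Lemma to push the starred variable up to a total of at least five levels, bypassing any $\pi$-coefficient case split. Your route through the $\pi$-coefficient dichotomy and Lemma~\ref{fromtwo} works just as well; the paper's version is marginally shorter because Lemma~\ref{slide} is indifferent to $\pi$-coefficients, while your version has the virtue of making the second configuration parallel to the first.
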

\begin{proof}~
\begin{itemize}
    \item $({}^{2}_{0}, 0, 1, 1) \xrightarrow{s2}$
    \begin{itemize}
        \item $(0, 0, {}^{2*}_{0}, 1)$
        \item $(0, 0, {}^{1*}_{1}, 1) \xrightarrow{d} (0, 0, 0, {}^{2*}_{0})$
    \end{itemize}

    A zero follows from Lemma \ref{fromtwo}.
    \item $({}^{2}_{0}, 0, 0, 1, 1) \xrightarrow{s3}$
    \begin{itemize}
        \item $(0, 0, 0,2*, 1)$
        \item $(0, 0, 0,1, 2*)$
    \end{itemize}
    
    A zero follows from Lemma \ref{slide} and Hensel's Lemma.
\end{itemize}
\end{proof}

\begin{lemma} \label{2and2and1}
Suppose that (\ref{eq}) has two variables in level $k$ with the same $\pi$-coefficient, at least two variables in level $k+1$, and a variable in at least one of level $k+2$, $k+3$, or $k+4$.  Then (\ref{eq}) has a nontrivial zero.
\end{lemma}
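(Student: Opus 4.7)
The plan is to split on the $\pi$-coefficients of the two variables in level $k+1$. The unifying idea is that the freedom to choose the output $\pi$-coefficient of a $d$-contraction (Lemma 2, part (1)) lets us align variables produced by two independent contractions in the same target level, so that Lemma \ref{fromtwo} applies.

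First, suppose the two variables in level $k+1$ have differing $\pi$-coefficients. I would perform an $s2$-contraction on the level-$k$ pair to produce a variable in level $k+2$ with some $\pi$-coefficient, call it $\alpha$, and then perform a $d$-contraction on the level-$(k+1)$ pair, choosing its output $\pi$-coefficient to be $\alpha$. This leaves two variables in level $k+2$ sharing a $\pi$-coefficient, the one coming from the $s2$-contraction having been raised two levels, so Lemma \ref{fromtwo} gives a zero. Note that in this subcase the extra variable in level $k+2$, $k+3$, or $k+4$ is never used.

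Second, suppose the two variables in level $k+1$ share a $\pi$-coefficient. I would apply an $s2$-contraction to this pair to produce a variable in level $k+3$, raised two levels from $k+1$, and then branch on the position of the extra variable. If it lies in $k+2$ or $k+4$, the form has the level-$k$ pair together with a variable in both of $k+2, k+3$ or both of $k+3, k+4$, so Lemma \ref{2same2more} applies. If it lies in $k+3$, the form has the level-$k$ pair together with two variables in level $k+3$: if these have differing $\pi$-coefficients Lemma \ref{2same2diff} applies, and otherwise Lemma \ref{fromtwo} applies since one of the two is raised two levels from $k+1$.

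I expect the main obstacle to be the first subcase above, in which both pairs must be consumed and the hypothesized extra variable is of no help. The resolution depends on the flexibility that $d$-contractions offer in choosing the resulting $\pi$-coefficient, which lets us orchestrate two independent contractions so that they land in the same level with matching $\pi$-coefficients. This trick is not used in any of the preceding lemmas and is the only non-routine step of the argument; the remaining case analysis reduces mechanically to Lemmas \ref{fromtwo}, \ref{2same2diff}, and \ref{2same2more}.
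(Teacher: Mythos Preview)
Your proof is correct and matches the paper's argument in the second subcase essentially verbatim. In the first subcase (differing $\pi$-coefficients at level $k+1$), the paper simply invokes Lemma~\ref{2same2diff} directly, whose first bullet $({}^{2}_{0}, {}^{1}_{1}) \xrightarrow{s2} (0, {}^{1}_{1}, 1*) \xrightarrow{d} (0, 0, {}^{2*}_{0})$ is precisely the contraction sequence you wrote out; so your claim that this trick ``is not used in any of the preceding lemmas'' is mistaken, and the subcase you flagged as the main obstacle is in fact already packaged as the first case of Lemma~\ref{2same2diff}.
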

\begin{proof}
By Lemma \ref{2same2diff}, assume the variables in level $k+1$ have the same $\pi$-coefficient.  They may be contracted to a variable in level $k+3$, and so if there is a variable in level $k+2$ or $k+4$, a zero follows from Lemma \ref{2same2more}.  If there is a variable in level $k+3$, by Lemma \ref{2same2diff}, assume it has the same $\pi$-coefficient as the resulting variable.  A zero follows from Lemma \ref{fromtwo}.
\end{proof}

\begin{lemma}\label{2diff2more}
Suppose that (\ref{eq}) has two variables in level $k$ with differing $\pi$-coefficients, a variable in level $k+1$, and a variable in at least one of levels $k+2$ or $k+4$.  Then (\ref{eq}) has a nontrivial zero.

\end{lemma}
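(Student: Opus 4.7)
The argument splits into two cases based on whether the extra variable sits at level $k+2$ or level $k+4$ (if both, either case suffices). In each case the opening move is a $d$-contraction of the two variables in level $k$; since their $\pi$-coefficients differ, this produces a new variable at level $k+1$ whose $\pi$-coefficient can be chosen freely, and this flexibility drives each subcase.

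When the extra variable sits at $k+2$, the plan is to chain two $d$-contractions so that the origin-$k$ ancestor is lifted all the way up to level $k+2$. Choose the first $d$-contraction's output $\pi$-coefficient to differ from the variable originally at $k+1$, producing a differing-$\pi$ pair at $k+1$; then a second $d$-contraction on this pair outputs a variable at $k+2$ whose $\pi$-coefficient is matched to the variable already at $k+2$. The resulting same-$\pi$ pair at $k+2$ has one member that has been lifted two levels from its original position at $k$, so Lemma~\ref{fromtwo} closes via
\[
({}^{1}_{1}, 1, 1) \xrightarrow{d} (0, {}^{1}_{1}, 1) \xrightarrow{d} (0, 0, {}^{2*}_{0}).
\]

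When the extra variable sits at $k+4$, the plan is instead to match $\pi$-coefficients at level $k+1$ and then $s3$-contract. Choosing the first $d$-contraction's output to match the $\pi$-coefficient of the variable at $k+1$ gives two same-$\pi$ variables in $k+1$, one of which descends from a variable originally at level $k$. An $s3$-contraction on this pair produces a variable at level $k+4$---any higher output would already be five levels above the original level-$k$ ancestor and would trigger Hensel's Lemma directly---and pairing this output with the pre-existing variable at $k+4$ leaves two variables in the same level, one raised at least four levels, so Lemma~\ref{fromtwo} again applies:
\[
({}^{1}_{1}, 1, 0, 0, 1) \xrightarrow{d} (0, {}^{2}_{0}, 0, 0, 1) \xrightarrow{s3} (0, 0, 0, 0, 2*).
\]
The only real care needed is tracking how many levels each intermediate variable has been lifted above its original position, since the two clauses of Lemma~\ref{fromtwo} distinguish between ``raised $\ge 2$ with matched $\pi$-coefficients'' (used in the first case via the planned $s3$) and ``raised $\ge 4$ unrestricted'' (used in the second case); neither case requires anything beyond two contractions.
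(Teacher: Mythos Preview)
Your proof is correct and matches the paper's argument essentially line for line: the same two cases, the same opening $d$-contraction exploiting the free choice of $\pi$-coefficient, the same second contraction ($d$ in the $k+2$ case, $s3$ in the $k+4$ case), and the same appeal to Lemma~\ref{fromtwo} at the end. The only cosmetic difference is that the paper marks the raised variable with an asterisk in the intermediate display $(0,{}^{1*}_{1},1)$, whereas you track the lift in prose.
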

\begin{proof}~
\begin{itemize}
    \item $({}^{1}_{1}, 1, 1) \xrightarrow{d} (0, {}^{1*}_{1}, 1) \xrightarrow{d} (0, 0, {}^{2*}_{0})$
    \item $({}^{1}_{1}, 1, 0, 0, 1) \xrightarrow{d} (0, {}^{2}_{0}, 0, 0, 1) \xrightarrow{s3} (0, 0, 0, 0, 2*)$
    
In both cases, a zero follows from Lemma \ref{fromtwo}.
\end{itemize}
\end{proof}

\begin{lemma} \label{threeokay}
Suppose that $K \in \{\mathbb{Q}_2(\sqrt{2}), \mathbb{Q}_2(\sqrt{10}), \mathbb{Q}_2(\sqrt{-2}), \mathbb{Q}_2(\sqrt{-10})\}$, $s \ge \frac{10}{7}d$, and after normalization, for some $k$ with $0 \le k \le d-4$, (\ref{eq}) has two variables in level $k$ with the same $\pi$-coefficient, at least four variables in level $k+2$, and at most $k+4$ variables in levels 0 through $k+1$ together.  Then (\ref{eq}) has a nontrivial zero.
\end{lemma}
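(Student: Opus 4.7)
The plan is to case-split on the $\pi$-coefficient structure of the four variables at level $k+2$, then perform a contraction cascade exploiting the \textit{st}-contraction that is available precisely for these four fields. If the four at $k+2$ contain two with differing $\pi$-coefficients, then together with the two same-$\pi$ variables at $k$, Lemma \ref{2same2diff} immediately gives a nontrivial zero. Hence I may assume all four at $k+2$ share a common $\pi$-coefficient $\beta$.

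The opening move is an \textit{st}-contraction on two of the four $\beta$-variables, producing one $\beta$-variable at $k+4$ and leaving two $\beta$-variables at $k+2$:
$$({}^2_0, 0, {}^4_0) \xrightarrow{st} ({}^2_0, 0, {}^2_0, 0, {}^{1*}_{0}).$$
The goal is now to produce a second variable at level $k+4$, which together with the first would admit a contraction sending a variable to level $\ge k+7$ and so invoke Hensel's Lemma at origin $h = k+2$. Three natural routes present themselves. First, an \textit{s3}-contraction of the two at $k$ that lands at $k+4$ pairs with the existing $\beta$ (now raised four levels from $k$), and Lemma \ref{fromtwo} (option 2) finishes. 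Second, an \textit{s2}-contraction of the two remaining $\beta$'s at $k+2$ may land at $k+4$ in the $\beta$-class, giving the desired pair for an \textit{s3}-contraction. Third, an \textit{s2}-contraction of the two at $k$ that lands in the $\beta$-class at $k+2$ creates three $\beta$'s at $k+2$, permitting a second \textit{st}-contraction to manufacture a second $\beta$ at $k+4$.

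The main obstacle is the residual sub-case in which the \textit{s3}-contraction of the two at $k$ lands at $k+3$ and both \textit{s2}-contractions miss the $\beta$-class. To close this, I would invoke normalization. Combining $s_0 + \cdots + s_{k+1} \le k+4$ with $s_0 + \cdots + s_{k+1} \ge \frac{10(k+2)}{7}$ forces $k \in \{0,1,2\}$ and nearly pins $s_0 + \cdots + s_{k+1}$, which in turn forces additional variables in nearby levels (at level $k+1$ when $k = 0$, and at levels below $k$ when $k \ge 1$). If $s_{k+3} \ge 1$, Lemma \ref{2same2more} finishes directly; otherwise, Lemma \ref{2and2and1}, Lemma \ref{empty1}, or Lemma \ref{slide}, applied to the variables guaranteed by the cumulative bound $s_{k+2} + \cdots + s_{k+6} \ge \frac{3k+42}{7}$ (which produces at least two further variables beyond level $k+3$ whenever $s_{k+3} = 0$), closes the remaining configurations. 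A careful case split on $k$ and on the precise low-level distribution, analogous to the multi-chain style of Lemma \ref{empty234}, is the most delicate step.
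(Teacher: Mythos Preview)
Your opening reduction (via Lemma~\ref{2same2diff}) to the case where all four variables at level $k+2$ share a $\pi$-coefficient is exactly how the paper begins. From there, however, your argument diverges and leaves a genuine gap: the ``residual sub-case'' is not proved but only sketched with a list of lemmas and a promise of a ``careful case split.'' In particular, you never verify that the configurations surviving Routes~1--3 are actually eliminated by Lemmas~\ref{2and2and1}, \ref{empty1}, or \ref{slide}, and you do not address the wraparound when $k+6\ge d$ (which occurs for $d=6$, the smallest admissible degree). The observation $k\le 2$ is correct but does not by itself close the argument.

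The paper's proof avoids the entire Route~1--3 detour, and in particular never uses the \textit{st}-contraction here. After the same first step, it clears level $k+3$ via Lemma~\ref{2same2more} (using the pair at level $k$), then applies Lemma~\ref{empty234} to the four same-$\pi$ variables at $k+2$ to assume levels $k+4,k+5,k+6$ are empty (handling the wraparound by noting level~$0$ is occupied). Now normalization gives
\[
s_0+\cdots+s_{k+6}\ \ge\ \tfrac{10}{7}(k+7)\ \ge\ k+10,
\]
and since levels $0,\dots,k+1$ hold at most $k+4$ variables while levels $k+3,\dots,k+6$ are empty, level $k+2$ must hold at least six variables, all in the same $\pi$-class; Lemma~\ref{max7} finishes. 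This is exactly what your residual analysis would need to converge to, so your three routes and the opening \textit{st}-contraction are superfluous. The moral: once the four at $k+2$ are same-$\pi$, use them with Lemma~\ref{empty234} directly rather than spending them on an \textit{st}-contraction.
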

\begin{proof}
By Lemma \ref{2same2diff}, assume all of the variables in level $k+2$ have the same $\pi$-coefficient.  By Lemma \ref{2same2more}, assume level $k+3$ is unoccupied.  By normalization, level 0 is occupied, and so if any of levels $k+4$, $k+5$, and $k+6$ is congruent to $0$ modulo $d$, then a zero follows from Lemma \ref{empty234}. Thus, assume $k+6 < d$.  By Lemma \ref{empty234}, assume levels $k+4$, $k+5$, and $k+6$ are unoccupied.  By normalization, $s \ge \frac{10}{7}d$ implies that levels 0 through $k+6$ contain at least $\frac{10}{7}(k+7) \ge k+10$ variables, and so level $k+2$ contains at least six variables.  A zero follows from Lemma \ref{max7}.
\end{proof}

\begin{lemma} \label{max3}
Suppose that $K \in \{\mathbb{Q}_2(\sqrt{2}), \mathbb{Q}_2(\sqrt{10}), \mathbb{Q}_2(\sqrt{-2}), \mathbb{Q}_2(\sqrt{-10})\}$, $s \ge \frac{3}{2}d$.  Then (\ref{eq}) has a nontrivial zero.
\end{lemma}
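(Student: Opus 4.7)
The plan is a case analysis on $s_0$, the number of variables at level 0 after normalization, reducing each configuration to one of the finishing lemmas of Sections 3 and 4. Since $\frac{3}{2}d > \frac{7}{5}d$, Lemma \ref{max5} lets me assume $s_0 \le 4$, and Lemma \ref{max4} forces the $\pi$-coefficient distribution at level 0 to be $({}^{3}_{1})$ whenever $s_0 = 4$. The normalization inequality gives
\[
s_0 + s_1 + \cdots + s_k \ge \left\lceil \tfrac{3(k+1)}{2}\right\rceil,
\]
so in particular $s_0 + s_1 \ge 3$, $s_0 + s_1 + s_2 \ge 5$, $s_0 + \cdots + s_3 \ge 6$, and $s_0 + \cdots + s_4 \ge 8$, guaranteeing substantial population in every initial five-level window.

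The unifying strategy is to produce two same-$\pi$-coefficient variables at some low level $k \in \{0,1\}$ and then combine with the variables supplied by normalization in levels $k+1,\ldots,k+4$ to invoke one of Lemmas \ref{2same2diff}, \ref{2same2more}, \ref{2and2and1}, \ref{empty234}, or \ref{threeokay}. The same-$\pi$-coefficient pair is obtained as follows: for $s_0 = 4$ with split $({}^{3}_{1})$ or for $s_0 = 3$, pigeonhole already supplies such a pair at level 0; for $s_0 = 2$ with split $({}^{2}_{0})$ it is immediate; and for $s_0 = 2$ with split $({}^{1}_{1})$, a \textit{d}-contraction produces a variable at level 1 of any desired $\pi$-coefficient, and since $s_1 \ge 1$ by normalization, level 1 then contains at least two variables and I choose the new $\pi$-coefficient to match an existing one, yielding the pair at level 1.

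Given a same-$\pi$-coefficient pair at level $k \in \{0,1\}$, the finishing argument splits according to the distribution of the normalization-forced variables in levels $k+1,\ldots,k+4$: when those variables cluster heavily at level $k+2$, Lemma \ref{threeokay} applies (the companion bound $s_0 + s_1 \le k+4$ being automatic in each configuration I reach); when two variables of differing $\pi$-coefficient sit nearby, Lemma \ref{2same2diff} fires; when variables occupy both $k+2$ and $k+3$ or both $k+3$ and $k+4$, Lemma \ref{2same2more} closes; and when two variables land at $k+1$ together with one in $k+2$, $k+3$, or $k+4$, Lemma \ref{2and2and1} closes. When none of these applies immediately, I pigeonhole a second same-$\pi$-coefficient pair (at level $k+1$ if $s_{k+1} \ge 3$, or at level $k+2$ if $s_{k+2} \ge 3$) and perform an \textit{s2}- or \textit{st}-contraction to either build an accumulation at level $k+3$ or $k+4$ (feeding Lemma \ref{2same2more} or Lemma \ref{empty234}) or to raise a variable far enough to close via Hensel's Lemma.

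The main obstacle will be the $s_0 = 4$ case with split $({}^{3}_{1})$ when the remaining normalization mass is awkwardly distributed across levels 1 through 4. Here the availability of the \textit{st}-contraction, specific to the four fields under consideration, is essential: it preserves the $\pi$-coefficient while raising a variable exactly two levels, and thus enables the accumulation at level $k+2$ needed to invoke Lemma \ref{threeokay}. The threshold $\frac{3}{2}d$ is precisely what makes the partial-sum inequalities force enough variables into the window $\{0,1,2,3,4\}$ to close every subcase, so I expect no further hypothesis on $s$ to be needed beyond the one stated.
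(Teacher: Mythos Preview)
Your plan has a genuine gap: it is a purely local argument on levels $0$ through $4$ (or a shifted window of that size), and such an argument cannot close the case. Consider the normalized configuration $[{}^{3}_{0}, 0, {}^{0}_{3}, 0, {}^{3}_{0}, 0, {}^{0}_{3}, 0, \ldots]$ in which every even level carries exactly three variables, all in the same $\pi$-coefficient class, with the classes alternating between consecutive even levels, and every odd level is empty. This form has exactly $\frac{3}{2}d$ variables and satisfies all the normalization inequalities. None of your finishing lemmas fire: Lemma~\ref{2same2diff} needs a differing-$\pi$ pair in a nearby level (there is none); Lemma~\ref{2same2more} needs two consecutive occupied levels among $k+2,k+3,k+4$ (levels $3$ and $5$ are empty); Lemma~\ref{2and2and1} needs $s_{k+1}\ge 2$; Lemma~\ref{threeokay} needs four variables at level $k+2$; and an \textit{st}-contraction from level $0$ lands at level $2$ with the \emph{wrong} $\pi$-coefficient (since the classes alternate), so Lemma~\ref{fromtwo} does not apply either. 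Your fallback of ``pigeonhole a second same-$\pi$ pair and \textit{s2}/\textit{st}-contract'' does not produce anything that feeds back into the listed lemmas, because every contraction you can make either lands with the wrong $\pi$-coefficient or in an empty odd level with nothing to pair against.

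The paper handles exactly this configuration by a global argument you have not anticipated: first an induction across \emph{all} pairs of levels forces the residual form into the alternating pattern above, and then the key observation is that $d=2m$ with $m$ \emph{odd} means there are an odd number of even levels modulo $d$, so the alternation cannot be maintained around the full cycle---some pair of adjacent even levels $k,k+2$ (taken modulo $d$) must share the same $\pi$-coefficient class, whereupon an \textit{st}-contraction from level $k$ lands in level $k+2$ with matching $\pi$-coefficient and Lemma~\ref{fromtwo} finishes. Your plan never invokes the parity of $m$ and never looks beyond the first few levels, so it cannot reproduce this step. You have also misidentified the main obstacle: the $s_0=4$, $({}^{3}_{1})$ case is comparatively easy (level $1$ can be forced empty by Lemma~\ref{empty1}, feeding directly into the same induction), whereas the genuinely delicate case is $s_0=3$ with all three in one class and level $1$ empty.
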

\begin{proof}
Assume that the form is normalized.  By Lemma \ref{max5}, assume that level 0 has at most four variables.  


First, suppose that level 0 contains four variables.  By Lemma \ref{max4}, assume that three of the variables fall into one $\pi$-coefficient class, and one into the other.  By Lemma \ref{empty1}, assume level 1 is unoccupied.  Therefore in this case, the form begins $[{}^{3}_{1},0]$.



Now, suppose level 0 contains exactly three variables, not all having the same $\pi$-coefficient.  Suppose that level 1 is occupied.  By Lemma \ref{2diff2more}, assume level 2 is unoccupied.  By normalization, level 1 contains at least two variables.  By Lemma \ref{2and2and1}, assume level 3 is unoccupied.  By normalization, level 1 contains at least three variables, and by Lemma \ref{2same2diff} assume they have the same $\pi$-coefficient.

$$({}^{2}_{1}, {}^{3}_{0}) \xrightarrow{d} (1, {}^{4*}_{0}) \xrightarrow{s2, s2} (1, 0, 0, 2*)$$
A zero follows from Lemma \ref{fromtwo}.  Thus assume level 1 is unoccupied.  Therefore in this case, the form begins $[{}^2_1,0]$.



Next, suppose that level 0 contains exactly three variables,  all having the same $\pi$-coefficient.  First, suppose that level 1 contains at least two variables.  By Lemma \ref{2and2and1}, assume levels 2, 3, and 4 are unoccupied.  By normalization, level 1 contains at least five variables, a pair of which can be contracted up exactly two level, and a zero follow from Lemma \ref{2and2and1}.  Thus, assume level 1 contains at most one variable.  Therefore in this case, the form begins $[{}^3_0,1]$ or $[{}^3_0,0]$.





Finally, suppose that level 0 contains exactly two variables. By normalization level 1 is occupied.  Suppose the variables in level 0 have differing $\pi$-coefficients.  By Lemma \ref{2diff2more}, assume level 2 is unoccupied, and so by normalization, level 1 contains at least three variables.  The two variables in level 0 can be contracted to a variable in level 1 so that its resulting four variables can be used to form two pairs with the same $\pi$-coefficient.  By Lemma \ref{empty234} then, assume levels 3, 4, and 5 are unoccupied, and so by normalization level 1 contains at least seven variables.  A zero follows from Lemma \ref{max7}.  Thus, assume the variables in level 0 have the same $\pi$-coefficient.  Suppose level 1 contains at least two variables.  By Lemma \ref{2same2diff}, assume they have the same $\pi$-coefficient.  By Lemma \ref{2and2and1}, assume levels 2, 3, and 4 are unoccupied. and so by normalization level 1 contains at least six variables with the same $\pi$-coefficient, and a zero follows from Lemma \ref{max7}.  Thus assume level 1 contains exactly one variable.  Therefore in this case, the form begins $[{}^2_0,1]$.


It follows from the above that the form begins with one of the blocks $[{}^{2}_{1}, 0]$, $[{}^{3}_{0}, 0]$, $[{}^{2}_{0}, 1]$ , $[{}^{3}_{1},0]$, or $[{}^{3}_{0}, 1]$.  In each case there are either $\frac{3}{2}\ell$ or  $\frac{3}{2} \ell + 1$ variables in the $\ell = 2$ levels specified.  We will show by induction that the subsequent pairs of levels are composed of the blocks $[{}^2_0,0]$ and $[{}^3_0,0]$.  By normalization, level $\ell$ is occupied, by Lemma \ref{2same2diff} assume level $\ell + 1$ is unoccupied, and so by normalization and Lemmas \ref{threeokay} and \ref{2same2diff}, assume level $\ell$ contains two or three variables having the same $\pi$-coefficient.  If $\frac{3}{2}\ell$ variables are contained in levels 0 through $\ell - 1$, then by normalization level $\ell$ contains exactly three.  By induction, it follows that level $d-2$ contains at least two variables with the same $\pi$-coefficient, and so if levels 0 and 1 are both occupied, a zero follows from Lemma \ref{2same2more} with $k=d-2$, and if level 0 contains two variables with differing $\pi$-coefficients, then a zero follows from Lemma \ref{2same2diff} with $k=d-2$.

The only case remaining is forms that begin $[{}^{3}_{0}, 0, {}^{3}_{0}, 0, {}^{3}_{0}, 0, \ldots]$.  If for some even $k$ the variables in \textit{both} of levels $k$ and $k+2$ are all in the same $\pi$-coefficient class, then perform an \textit{st}-contraction from level $k$ to level $k+2$ and a zero follows from Lemma \ref{fromtwo}.  Thus assume the $\pi$-coefficient classes of the variables in the even numbered levels alternate.  Using a slight modification of our notation, this might be written $[{}^{3}_{0}, 0, {}^{0}_{3}, 0, {}^{3}_{0}, 0, {}^{0}_{3}, \ldots]$.  However, because $d=2m$, where $m$ is odd, the variables in levels $d-2$ and $0$ all have the same $\pi$-coefficient, and a zero follows.

\end{proof}

\begin{lemma} \label{max3again}
Suppose that $K \in \{\mathbb{Q}_2(\sqrt{-1}), \mathbb{Q}_2(\sqrt{-5})\}$, $s \ge d+1$ and after normalization (\ref{eq}) has at least three variables in level 0 and at least two variables in level 1.  Then (\ref{eq}) has a nontrivial zero.
\end{lemma}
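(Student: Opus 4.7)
The plan is to reduce to a tight configuration in levels $0$ through $5$ and then close with Lemma~\ref{empty4} applied at level~$1$. First I would invoke Lemma~\ref{max5}, whose strengthened form for these two fields under $s \ge d+1$ forces $s_0 \le 4$. The case $s_0 = 4$ splits by the $\pi$-coefficient distribution in level~$0$: for splits of type $4+0$ or $2+2$ two pairs sharing a $\pi$-coefficient are available and Lemma~\ref{max4} closes it, while for a $3+1$ split one has simultaneously a pair with the same $\pi$-coefficient and a pair with differing $\pi$-coefficients, so Lemma~\ref{empty1} applies using $s_1 \ge 2 \ge 1$. Hence I may assume $s_0 = 3$.

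With $s_0 = 3$, the pigeonhole principle yields a pair in level~$0$ with the same $\pi$-coefficient. Using this pair together with the hypothesis $s_1 \ge 2$: if the two variables in level~$1$ have differing $\pi$-coefficients, Lemma~\ref{2same2diff} (applied at $k=0$) finishes the proof, so I may assume level~$1$ is monochromatic in its $\pi$-coefficient. Now a same-$\pi$-coefficient pair in level~$0$ together with two variables in level~$1$ invokes Lemma~\ref{2and2and1} at $k=0$, which forces levels~$2$, $3$, and $4$ to be unoccupied.

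The remainder is a normalization count. Since $s \ge d+1$, the normalization inequality gives $s_0 + s_1 + \cdots + s_4 \ge \lceil 5(d+1)/d \rceil = 6$, and with levels~$2$, $3$, $4$ empty this forces $s_1 \ge 3$, so level~$1$ contains at least three variables all sharing a single $\pi$-coefficient. The next normalization inequality gives $s_0 + \cdots + s_5 \ge \lceil 6(d+1)/d \rceil = 7$, forcing $s_5 \ge 1$. Lemma~\ref{empty4} applied at $k=1$ then produces a nontrivial zero. The delicate point is noticing that the hypothesis $s \ge d+1$ is just strong enough, after driving levels $2$--$4$ to zero, both to push a third monochromatic variable into level~$1$ and to populate level~$5 = 1+4$, which is precisely the configuration Lemma~\ref{empty4} consumes.
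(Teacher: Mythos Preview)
Your argument tracks the paper's proof almost exactly through the point where $s_0=3$, all level-$1$ variables share a $\pi$-coefficient, and levels $2$, $3$, $4$ are empty with $s_1\ge 3$. The final step, however, contains a genuine gap: from $s_0+\cdots+s_5\ge 7$ together with $s_0=3$ and $s_2=s_3=s_4=0$ you obtain only $s_1+s_5\ge 4$. Since the preceding count gave merely $s_1\ge 3$, not $s_1=3$, the case $s_1\ge 4$ and $s_5=0$ is not excluded, and your invocation of Lemma~\ref{empty4} at $k=1$ does not cover it.

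The paper handles this by reversing the logic: it uses Lemma~\ref{empty4} at $k=1$ to \emph{dispose of} the case $s_5\ge 1$, and then assumes $s_5=0$, whence $s_1\ge 4$. From four monochromatic variables in level~$1$ it performs two $s2$-contractions to produce two variables in level~$3$, each raised two levels. If these land in different $\pi$-coefficient classes, Lemma~\ref{2same2diff} at $k=0$ (using the same-$\pi$ pair still sitting in level~$0$) finishes; otherwise they share a $\pi$-coefficient and Lemma~\ref{fromtwo} applies directly. Your proof becomes correct once you insert this residual case; alternatively, you could observe that $s_1\ge 5$ is handled by Lemma~\ref{fivesame} and treat $s_1=4$, $s_5=0$ separately as above.
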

\begin{proof}
 By Lemmas \ref{empty1} and \ref{max4}, assume level 0 contains exactly three variables.  By Lemma \ref{2same2diff}, assume the variables in level 1 have the same $\pi$-coefficient.  By Lemma \ref{2and2and1}, assume levels 2, 3, and 4 are unoccupied.  By normalization, levels 0 through 4 together contain at least six variables, and so level 1 contains at least three variables having the same $\pi$-coefficient. Thus by Lemma \ref{empty4} with $k=1$ assume level 5 is unoccupied.  By normalization, levels 0 through 5 together contain at least seven variables, and so level 1 contains at least four variables with the same $\pi$-coefficient.  Contract both pairs to level 3, and by Lemma \ref{2same2diff}, assume the resulting variables have the same $\pi$-coefficients.  A zero follows from Lemma \ref{fromtwo}.
\end{proof}

\begin{lemma} \label{threeoh}
Suppose that $K \in \{\mathbb{Q}_2(\sqrt{-1}), \mathbb{Q}_2(\sqrt{-5})\}$, $s \ge d+1$ and after normalization, for some $i \in \{2,3\}$ and $k$ with $0 \le k+i \le d-1$, (\ref{eq}) has two variables in level $k$ with the same $\pi$-coefficient, at least four variables in level $k+i$, no variables in level $k+i + 1$, and at most $k+i+1$ variables in levels 0 through $k+i-1$ together.  Then (\ref{eq}) has a nontrivial zero.
\end{lemma}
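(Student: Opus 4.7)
The plan follows the pattern of Lemma \ref{threeokay}: pin down the $\pi$-coefficient structure of level $k+i$ using Lemma \ref{2same2diff}, rule out the following few levels via Lemma \ref{empty234}, and finish with a normalization count.

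First I would reduce to the case where level $k+i$ contains exactly four variables, all sharing a common $\pi$-coefficient. Indeed, since $i \in \{2,3\}$ lies in the range covered by Lemma \ref{2same2diff}, any two variables in level $k+i$ with differing $\pi$-coefficients would combine with the pair in level $k$ to yield a zero, so all variables in level $k+i$ share one $\pi$-coefficient; and by Lemma \ref{fivesame}, five or more variables in the same level with the same $\pi$-coefficient already force a zero. Thus exactly four variables sit in level $k+i$.

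The four variables in level $k+i$ now form two pairs with the same $\pi$-coefficient, so Lemma \ref{empty234} produces a zero as soon as any of levels $k+i+2$, $k+i+3$, or $k+i+4$ (understood modulo $d$) is occupied. Normalization with $s \ge d+1$ gives $s_0 \ge 2$, so level $0$ is occupied. For $k+i \in \{d-2, d-3, d-4\}$, one of $k+i+2, k+i+3, k+i+4$ is congruent to $0$ modulo $d$, and Lemma \ref{empty234} yields a zero. The boundary case $k+i = d-1$ is vacuous because the assumption that level $k+i+1 \equiv 0$ is empty contradicts $s_0 \ge 2$.

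It remains to treat $k+i \le d-5$, in which case I would assume none of levels $k+i+2, k+i+3, k+i+4$ is occupied. Combined with the hypotheses (at most $k+i+1$ variables in levels $0$ through $k+i-1$, exactly four in level $k+i$, and level $k+i+1$ empty), levels $0$ through $k+i+4$ together contain at most $k+i+5$ variables. Normalization, however, forces
\[
s_0 + \cdots + s_{k+i+4} \;\ge\; \left\lceil \frac{(k+i+5)(d+1)}{d} \right\rceil \;=\; k+i+6,
\]
where the equality uses $k+i+5 \le d$, a contradiction. The main delicate point throughout is the modular wraparound for small $d-k-i$, handled in the preceding paragraph.
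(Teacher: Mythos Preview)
Your argument is correct and follows the same approach as the paper's own proof: force level $k+i$ to be monochrome via Lemma~\ref{2same2diff}, dispose of the wraparound cases with Lemma~\ref{empty234}, and then use the normalization count together with Lemma~\ref{fivesame}. The only cosmetic difference is that you invoke Lemma~\ref{fivesame} first to pin level $k+i$ at exactly four variables and then derive a counting contradiction, whereas the paper runs the count first to force at least five variables and then applies Lemma~\ref{fivesame}; these are contrapositives of each other. Your explicit treatment of the vacuous case $k+i=d-1$ is a nice touch that the paper leaves implicit.
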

\begin{proof}
By Lemma \ref{2same2diff}, assume all of the variables in level $k+i$ have the same $\pi$-coefficient.  If any of $k+i+2$, $k+i+3$, or $k+i+4$ is congruent to 0 modulo $d$, then a zero follows from Lemma \ref{empty234}.  Otherwise, assume these levels are unoccupied, and so by normalization, $s \ge d+1$ implies that levels 0 through $k+i+4$ contain at least $k+i+6$ variables, and so level $k+i$ contains at least five variables.  A zero follows from Lemma \ref{fivesame}.
\end{proof}

\begin{lemma} \label{final}
Suppose that $K \in \{\mathbb{Q}_2(\sqrt{-1}), \mathbb{Q}_2(\sqrt{-5})\}$ and $s \ge d+1$.  Then (\ref{eq}) has a nontrivial zero.
\end{lemma}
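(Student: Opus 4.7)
The plan is to mirror the case analysis of Lemma \ref{max3}, but substituting in the tools specific to $K \in \{\mathbb{Q}_2(\sqrt{-1}), \mathbb{Q}_2(\sqrt{-5})\}$: the $t$-contraction together with Lemmas \ref{fivesame}, \ref{empty4}, \ref{max3again}, and \ref{threeoh}. After normalizing (\ref{eq}), Lemma \ref{max5} forces $s_0 \le 4$, and normalization gives $s_0 \ge 2$, leaving the three cases $s_0 \in \{2,3,4\}$ to dispatch in turn.

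First I would handle $s_0 = 4$. Lemma \ref{max4} rules out the $(2,2)$ $\pi$-coefficient split in level $0$, so the split must be $(4,0)$ or $(3,1)$. In the $(4,0)$ case, Lemma \ref{empty4} with $k=0$ kills level $4$, and Lemma \ref{fivesame} blocks any later accumulation of five same-$\pi$-coefficient variables; in the $(3,1)$ case, Lemma \ref{empty1} forces $s_1 = 0$. For $s_0 = 3$, Lemma \ref{max3again} gives $s_1 \le 1$; if the three variables share a $\pi$-coefficient, Lemma \ref{empty4} kills level $4$, and otherwise the split is $(2,1)$ and a combination of Lemmas \ref{empty1}, \ref{2diff2more} and \ref{2and2and1} pins down the admissible shapes. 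For $s_0 = 2$, normalization forces $s_1 \ge 1$, and Lemmas \ref{2same2diff}, \ref{2diff2more}, \ref{2and2and1} and the new Lemma \ref{threeoh} cut the possibilities down to a short list. At the end of this bookkeeping, the surviving initial two-level blocks should be among $[{}^{2}_{0}, 1]$, $[{}^{2}_{1}, 0]$, $[{}^{3}_{0}, 0]$, $[{}^{3}_{0}, 1]$, and $[{}^{3}_{1}, 0]$, each contributing two or three variables to levels $0$ and $1$.

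The same analysis applied at each even level then shows inductively that the form decomposes into $m$ consecutive two-level blocks drawn from this short list. Because $s \ge d+1 = 2m+1$, at most one block can carry three variables and the remaining $m-1$ carry exactly two, and this tight density, combined with the normalization bound at each stage, prunes the admissible patterns further as one moves forward through the levels.

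The main obstacle will be the closing wrap-around step, connecting levels $d-2$, $d-1$ back to levels $0$, $1$. In Lemma \ref{max3} the analogous closure used the $st$-contraction together with $m$ odd, but the $st$-contraction is unavailable over $\mathbb{Q}_2(\sqrt{-1})$ and $\mathbb{Q}_2(\sqrt{-5})$. In its place I would use the $t$-contraction, which produces a variable exactly four levels up, together with Lemma \ref{empty4}. The delicate part is tracking the $\pi$-coefficient classes across the $m$ blocks so that, after a suitable $t$- or $s2$-contraction near the top of the form, one obtains two variables in a common level with one of them raised enough levels to trigger Lemma \ref{fromtwo}, exploiting the oddness of $m$ to prevent any alternating $\pi$-coefficient pattern from closing up consistently around the cycle.
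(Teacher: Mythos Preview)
Your plan has the right overall shape but contains two genuine structural errors that would prevent it from closing.

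First, the two-level block decomposition is wrong for these fields. You list $[{}^{3}_{0},0]$ among the admissible two-level blocks, but in fact this configuration is \emph{always} eliminated: if level $\ell$ carries three variables with the same $\pi$-coefficient, then Lemma \ref{empty4} forces level $\ell+4$ to be empty, and if level $\ell+2$ were occupied you could $s2$-contract two of those variables up to level $\ell+4$ and again invoke Lemma \ref{empty4}. Hence level $\ell+2$ must also be empty, and the correct block in this case is the \emph{three}-level block $[{}^{3}_{0},0,0]$. The paper's induction therefore alternates between two-level blocks $[{}^{2}_{0},0]$ and three-level blocks $[{}^{3}_{0},0,0]$; your uniform ``$m$ two-level blocks'' picture does not match the actual constraint and the counting argument you sketch (``at most one block can carry three variables'') does not follow. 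Relatedly, several of the initial blocks are also three levels long ($[{}^{3}_{1},0,0]$, $[{}^{2}_{1},1,0]$, $[{}^{3}_{0},1,0]$, $[{}^{1}_{1},2,0]$), not two. You also misread Lemma \ref{max4}: it rules out any four variables that can be split into two same-$\pi$ pairs, which kills the $(4,0)$ split just as surely as $(2,2)$, leaving only $(3,1)$.

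Second, the wrap-around you propose is the wrong mechanism. The oddness of $m$ and the alternating $\pi$-coefficient pattern were essential in Lemma \ref{max3} precisely because the $st$-contraction is available there; here neither is used. With the correct block structure, the final block ends with either two same-$\pi$ variables in level $d-2$ or $d-3$, or three same-$\pi$ variables in level $d-4$. In the first case Lemmas \ref{2same2more} and \ref{2same2diff} (with $k=d-2$ or $k=d-3$) wrap around to levels $0$ and $1$, which are always occupied; in the second case Lemma \ref{empty4} with $k=d-4$ finishes, since level $0$ is occupied. No parity argument is needed.
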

\begin{proof}
Assume that the form is normalized.  By Lemma \ref{max5}, assume that level 0 has at most four variables.

First, suppose level 0 has four variables.  By Lemma \ref{max4}, three of them fall into one $\pi$-coefficient class, and one into the other.  By Lemmas \ref{empty1} and \ref{empty4}, assume levels 1 and 4 are unoccupied.  By normalization, levels 2 and 3 together contain at least two variables, and by Lemma \ref{2same2more}, assume at most one is occupied.  By Lemma \ref{2same2diff}, assume they have the same $\pi$-coefficient.  If level 2 contains at least two variables, they can be contracted to a variable in level 4 and a zero follows from Lemma \ref{empty4}.  Thus assume level 2 is unoccupied, level 3 contains at least two variables, and by Lemma \ref{threeoh} at most three variables.  Therefore in this case, the form begins with $[{}^3_1,0,0]$ followed by $[{}^2_0,0]$ or $[{}^3_0,0]$.


Next, suppose there are exactly three variables in level 0, not all having the same $\pi$-coefficient.  By Lemma \ref{max3again}, assume level 1 has at most one variable.  Suppose level 1 has exactly one variable.  Then by Lemma \ref{2diff2more}, assume levels 2 and 4 are unoccupied.  By normalization, level 3 contains at least two variables, and by Lemma \ref{threeoh} at most three.  By Lemma \ref{2same2diff} assume they have the same $\pi$-coefficient.  Thus assume level 3 contains two or three variables having the same $\pi$-coefficient, and level 4 contains zero.  Now, suppose level 1 is unoccupied.  By normalization level 2 is occupied, and by Lemma \ref{2same2more}, assume level 3 is unoccupied.  By normalization, level 2 contains at least two variables, and by Lemma \ref{threeoh} at most three.  By Lemma \ref{2same2diff} assume they have the same $\pi$-coefficient.  Thus assume level 2 contains two or three variables and level 3 contains zero.  Therefore in this case, the form begins with $[{}^2_1,1,0]$ or $[{}^2_1, 0]$ followed by $[{}^2_0,0]$ or $[{}^3_0,0]$.

Now suppose level 0 contains exactly three variables having the same $\pi$-coefficient.  By Lemma \ref{max3again}, assume level 1 has at most one variable.  Suppose level 1 is unoccupied.  By normalization level 2 contains at least one variable, and so by Lemma \ref{2same2more}, assume level 3 is unoccupied.  By normalization, level 2 contains at least two variables.  By Lemma \ref{2same2diff}, assume they have the same $\pi$-coefficient and thus can be contracted to a variable in level 4.  A zero follows from Lemma \ref{empty4}.  Thus assume level 1 contains exactly one variable.  By Lemma \ref{empty4}, assume level 4 is unoccupied.  By normalization, levels 2 and 3 together contain at least two variables, and by Lemma \ref{2same2more}, assume at most one is occupied, and by Lemma \ref{2same2diff} that they have the same $\pi$-coefficient.  If the variables are in level 2, they can be contracted to a variable in level 4, and a zero follows from Lemma \ref{empty4}.  Thus, by Lemma \ref{threeoh} assume level 3 contains two or three variables with the same $\pi$-coefficient and level 4 contains zero.  Therefore in this case, the form begins $[{}^3_0, 1, 0]$ followed by $[{}^2_0,0]$ or $[{}^3_0,0]$.

Next, suppose level 0 contains exactly two variables with differing $\pi$-coefficients.  By normalization, level 1 contains at least one variable, and by Lemma \ref{2diff2more}, assume levels 2 and 4 are unoccupied, and so by normalization level 1 contains at least two variables.  First, suppose level 1 contains at least three variables.  A \textit{d}-contraction can be performed from level 0 so that the resulting variables in level 1 form two pairs with the same $\pi$-coefficients.  By Lemma \ref{empty234} with $k=1$, assume levels 3, 4, and 5 are also unoccupied.  By normalization level 1 contains at least five variables.  A \textit{d}-contraction can be performed from level 0 such that the resulting six variables in level 1 can be used to form three pairs each having the same $\pi$-coefficient; a zero follows from Lemma \ref{max7}.  Thus, assume level 1 contains exactly two variables, and so by normalization applied to levels 0 through 4, level 3 contains at least two variables.  After a \textit{d}-contraction from level 0, assume by Lemma \ref{2same2diff} with $k=1$ that the variables in level 3 have the same $\pi$-coefficient.  Suppose level 3 contains at least four variables.  By Lemma \ref{empty234}, if $d=6$ a zero follows; otherwise assume levels 5, 6, and 7 are unoccupied.  By normalization level 3 contains at least five variables, and a zero follows from Lemma \ref{fivesame}.  Thus assume level 3 contains at most three variables.  Therefore in this case, the form begins with $[{}^1_1, 2, 0]$ and is followed by $[{}^2_0, 0]$ or $[{}^3_0, 0]$.


Finally, suppose level 0 contains exactly two variables with the same $\pi$-coefficient.  By normalization, level 1 is occupied. Suppose level 1 contains at least two variables.  By Lemma \ref{2and2and1}, assume levels 2, 3, and 4 are unoccupied.
By normalization, level 1 contains at least four variables.  By Lemma \ref{empty4} with $k=1$, assume level 5 is unoccupied, and so by normalization level 1 contains at least five variables.  By Lemma \ref{2same2diff}, assume they have the same $\pi$-coefficient; a zero follows from Lemma \ref{fivesame}. Thus assume level 1 contains exactly one variable, and by normalization level 2 is occupied.  By Lemma \ref{2same2more}, assume level 3 is unoccupied, and so by normalization level 2 contains at least two variables, and by Lemma \ref{threeoh}, at most three.  By Lemma \ref{2same2diff}, assume they have the same $\pi$-coefficient.  Therefore in this case, the form begins with $[{}^2_0, 1]$ followed by $[{}^2_0,0]$ or $[{}^3_0,0]$.


It follows from the above that the form begins with $[{}^3_1,0,0]$, $[{}^2_1,1,0]$, $[{}^2_1,0]$, $[{}^3_0,1,0]$, $[{}^1_1,2,0]$, or $[{}^2_0, 1]$ followed by $[{}^2_0, 0]$ or $[{}^3_0, 0]$.  In each case, the form begins with a block of $\ell = 2$ or $\ell = 3$ levels, level $\ell$ is assumed to have either two or three variables having the same $\pi$-coefficient, level $\ell + 1$ is assumed to be unoccupied, and levels 0 through $\ell - 1$ contain exactly $\ell + 1$ variables.  If $\ell = d-2$ or $\ell = d-3$, a zero follows from the argument below.  Otherwise, we proceed by induction as follows.

First suppose that level $\ell$ contains exactly two variables.  By normalization, level $\ell + 2$ is occupied, and by Lemma \ref{2same2more} with $k = \ell$, assume level $\ell + 3$ is unoccupied.  By normalization, level $\ell + 2$ contains at least two variables, by Lemma \ref{threeoh} with $k = \ell$ and $i=2$ at most three, and by Lemma \ref{2same2diff} with $k = \ell$, they have the same $\pi$-coefficient.  

Now, suppose that level $\ell$ contains three variables.  By Lemma \ref{empty4}, assume level $\ell+4$ is unoccupied, and by Lemma \ref{2same2diff}, assume the variables in each of levels $\ell+2$ and $\ell+3$ have the same $\pi$-coefficient.  By normalization, levels $\ell+2$ and $\ell+3$ together contain at least two variables, and by Lemma \ref{2same2more} only one is occupied.  If the variables are in level $\ell+2$, they can be contracted to a variable in level $\ell+4$ and a zero follows from Lemma \ref{empty4}.  Thus assume level $\ell+2$ is unoccupied and by Lemma \ref{threeoh}, assume level $\ell+3$ contains two or three variables.

In both cases, the initial block of $\ell$ levels is followed by $[{}^2_0, 0]$ or $[{}^3_0,0,0]$.  Further, the next two levels are subject to the same constraints as were levels $\ell$ and $\ell + 1$, and so by induction it follows that at least one of levels $d-3$ or $d-2$ contains two variables with the same $\pi$-coefficient, or level $d-4$ contains three variables with the same $\pi$-coefficient.  In the former case, a zero follows from Lemmas \ref{2same2more} and \ref{2same2diff} with $k = d-3$ or $k=d-2$.  In the latter case, a zero follows from Lemma \ref{empty4} with $k=d-4$.


\end{proof}

If $K \in \{\mathbb{Q}_2(\sqrt{2}), \mathbb{Q}_2(\sqrt{10}), \mathbb{Q}_2(\sqrt{-2}), \mathbb{Q}_2(\sqrt{-10})\}$, from Lemma \ref{max3} we have $\Gamma(d, K) \le \frac{3}{2}d$.  To demonstrate we have the equality from the first part of Theorem \ref{theo}, we present a form in $s = \frac{3}{2}d - 1$ variables that has no nontrivial zero.

\begin{align*}
f = \sum_{i=0}^{\frac{d-2}{4} - 1} \pi^{4i}[(x_{6i}^d + x_{6i+1}^d + x_{6i+2}^d) + (\pi^2+\pi^3)(x_{6i+3}^d + x_{6i+4}^d + x_{6i+5}^d)] + \\
\pi y^d + \pi^{d-2}z^d
\end{align*}

Recall that for these four fields $2 \equiv \pi^2 \pmod{\pi^4}$ and that the $d$\textsuperscript{th} powers modulo $\pi^4$ are $0$, $1$, and $1 + \pi^2 + \pi^3$.

Suppose that $f$ has a nontrivial zero $(a_0, \ldots, a_{s-3}, b, c)$ in $\mathcal{O}$.  We have $a_0^d + a_1^d + a_2^d + \pi b^d \equiv 0 \pmod{\pi^2}$.  Since the $d$\textsuperscript{th} powers modulo $\pi^2$ are $0$ and $1$, it follows that $\pi | b$.  Note that

\begin{align*}
    (1 + \pi)(x^d + y^d + z^d) + \pi^2(u^d + v^d + w^d) \equiv \\
    (1 + \pi)[(x^d + y^d + z^d) + (\pi^2 + \pi^3)(u^d + v^d + w^d)] \pmod{\pi^4}
\end{align*}

Suppose $\pi$ divides all of $a_i$ for $i < 3j$.  Then $\pi^2$ divides $a_{3j}^d + a_{3j+1}^d + a_{3j+2}^d$.  If $\pi$ does not divide all of $a_{3j}$, $a_{3j+1}$, and $a_{3j+2}$, then modulo $\pi^4$, $a_{3j}^d + a_{3j+1}^d + a_{3j+2}^d \in \{\pi^2, \pi^3\}$, and so $a_{3j}^d + a_{3j+1}^d + a_{3j+2}^d + (\pi^2 + \pi^3)(a_{3j+3}^d + a_{3j+4}^d + a_{3j+5}^d) \not \equiv 0 \pmod{\pi^4}$.  Thus assume all of $a_{3j}$, $a_{3j+1}$, and $a_{3j+2}$ are divisible by $\pi$.  By induction, $\pi$ divides each of $a_0, \ldots, a_{s-3}$.  So, we must have $\pi | c$, a contradiction.  Thus $f$ has no primitive zero modulo $\pi^d$, and therefore $f$ has no nontrivial zero over $K$.

If $K \in \{\mathbb{Q}_2(\sqrt{-1}), \mathbb{Q}_2(\sqrt{-5})\}$, from Lemma \ref{final} we have $\Gamma(d, K) \le d+1$.  To demonstrate we have the equality from the second part of Theorem \ref{theo}, we present a form in $s = d$ variables that has no nontrivial zero over $K$.

$$g = \sum_{i=0}^{d-1} \pi^i x_i^d$$

\bibliographystyle{plain}
\bibliography{biblio}
\end{document}